\documentclass[12pt, a4paper]{article}

\usepackage{amsmath}
\usepackage{amssymb}
\usepackage{amsthm}
\usepackage{geometry}
\newtheorem{theorem}{Theorem}
\newtheorem{proposition}{Proposition}
\newtheorem{lemma}{Lemma}
\newtheorem{assumption}{Assumption}
\theoremstyle{remark}
\newtheorem{remark}{Remark}

\title{Logarithmic Resonance at Mixed Boundary Junctions in Gradient-Dependent Semilinear Equations}
\author{
Marin Mi\v{s}ur\thanks{Marin Mi\v{s}ur,
University of Zagreb, Faculty of Science, Bijeni\v{c}ka cesta 30,
10000 Zagreb, Croatia.
\texttt{mmisur@math.hr}}
}
\date{\today}

\begin{document}

\maketitle

\begin{abstract}
The regularity of solutions to elliptic partial differential equations degrades severely at mixed Dirichlet-Neumann boundary junctions,
characterized classically by an $\mathcal{O}(r^{1/2})$ leading singular function. While this linear behavior is well documented, the
introduction of gradient-dependent semilinear perturbations alters the local asymptotic profile. This article proves that
gradient penalties scaling linearly with $|\nabla u|$ induce a highly localized $\mathcal{O}(r^{-1/2})$ source term that
resonates with the half-integer spectrum of the principal homogeneous differential operator. This non-orthogonal resonance causes standard
separable polynomial assumptions to fail at order $\mathcal{O}(r^{3/2})$. We establish a generalized resonance theorem that
forces the emergence of a logarithmic anomaly, providing the exact analytical formulation of the resulting $r^{3/2} \ln(r)$ profile
alongside rigorous local Sobolev regularity bounds for the remainder. By calculating the exact logarithmic coefficients and angular offsets
for $\ell_1$, $\ell_2$, $\ell_\infty$, and arbitrary $\ell_q$-norm penalties, we demonstrate the universality of this obstruction.
Finally, we formalize the corresponding enriched continuous Galerkin space (XFEM), establish its quasi-optimality, and present finite element
experiments that confirm the predicted localized pollution, recover the predicted logarithmic coefficients across the $\ell_1$, $\ell_2$, and
$\ell_\infty$ penalties on a curvature-free flat junction, and show that resolving the junction recovers optimal degree-of-freedom efficiency
in standard finite element solvers; we close by outlining the targeted software architectures required for a fully enriched implementation.
\end{abstract}

\noindent \textbf{Keywords.} Mixed boundary value problem; Dirichlet--Neumann junction; corner singularity; semilinear elliptic equation; gradient-dependent nonlinearity; logarithmic resonance; asymptotic expansion; Fredholm alternative; extended finite element method (XFEM).

\medskip
\noindent \textbf{2020 Mathematics Subject Classification.} Primary 35J61, 35B40, 65N30; Secondary 35J25, 35B65, 35C20, 65N15.

\section{Introduction}
\label{sec:intro}
The regularity theory of elliptic partial differential equations establishes that solutions experience a severe loss of regularity in regions where
boundary conditions abruptly transition. For mixed boundary value problems, the junction between a Dirichlet boundary $\Gamma_D$ and a Neumann
boundary $\Gamma_N$ alters the local Sobolev space in which the solution resides. In the classical case of the linear Poisson equation
with smooth data, the leading exceptional function near a locally flat mixed junction takes the well-known singular form
$u_0 = c_0 r^{1/2} \sin(\theta/2)$ \cite{grisvard1985, wendland1979}. 

This $\mathcal{O}(r^{1/2})$ singular profile is a cornerstone of computational mechanics and numerical analysis, as it dictates the degradation of
global convergence rates for standard Galerkin finite element methods. Classically, optimal linear or quadratic convergence is restored by incorporating
this exact analytical profile into the discrete approximation space, either via highly localized mesh grading strategies \cite{babuska1979} or explicitly
through extended finite element methods (XFEM) \cite{melenk1996, moes1999, fries2010}. 

However, the introduction of a semilinear term, particularly one possessing a dependency on the gradient of the solution, drastically complicates this
local boundary behavior. Such gradient-dependent nonlinearities frequently arise in generalized transport models, non-Newtonian fluid mechanics, and
stochastic control problems. Indeed, the very study of mixed Dirichlet--Neumann problems for such semilinear equations was motivated in \cite{misur2021} by the
Dirichlet--Neumann boundary problem for scalar conservation laws \cite{mmn2016}, where, in a setting arising from traffic flow modelling, an elliptic
regularisation of the conservation law renders the method of compensated compactness applicable. Because the gradient of the leading linear singularity scales as
$\mathcal{O}(r^{-1/2})$, evaluating a continuous semilinear penalty near the mixed junction generates a strong, highly localized non-homogeneous source term. 

This article establishes a generalized resonance theorem for these gradient-dependent semilinear equations. We prove that the first-order semilinear
correction cannot be captured by standard separable polynomial assumptions. Instead, the localized forcing induced by the $\mathcal{O}(r^{-1/2})$
gradient resonates with the $\lambda = 3/2$ eigenspace of the associated homogeneous Sturm-Liouville operator. This non-orthogonal resonance
breaks the standard separation of variables, mandating the emergence of a logarithmic anomaly of the form $r^{3/2} \ln(r)$.
The appearance of logarithmic factors when the radial exponent of the forcing coincides with an eigenvalue of the associated operator pencil is itself
a classical feature of the Kondrat'ev theory of corner asymptotics \cite{kondratev1967, kmr1997, nazarov1994}; the novelty established here is that the
resonance is generated intrinsically by the gradient-dependent nonlinearity, rather than by a geometric coalescence of exponents at a reentrant corner. 

The manuscript is structured as follows. Section \ref{sec:framework} establishes the geometric configuration and defines the rigorous functional constraints
required for the base forcing and gradient penalty terms. Section \ref{sec:theorem} presents the core theoretical contribution: the generalized resonance
theorem and the proof of the exact analytical expansion, formally supported by local Sobolev regularity bounds. Section \ref{sec:application} applies this
theorem to derive the exact logarithmic coefficients and angular offsets for a variety of specific vector norms ($\ell_1$, $\ell_2$, $\ell_\infty$, and a
generalized $\ell_q$), proving the universality of the resonance. To bridge theory and computation, Section \ref{sec:xfem_formulation} formalizes the enriched
continuous Galerkin formulation (XFEM) required to capture this anomaly. Section \ref{sec:numerical_comparison} reports finite element experiments, built on
the discretization of \cite{misur2021}, that probe the predicted resonance, quantitatively validate the logarithmic coefficient against its analytic value,
and demonstrate the efficiency of junction-adapted refinement, and finally, Section \ref{sec:future_work} discusses specific software implementation architectures
alongside future topological extensions of this theory to three-dimensional collision edges.

\section{Geometric and Functional Framework}
\label{sec:framework}
Let $\Omega \subset \mathbf{R}^2$ be a bounded domain with a piecewise Lipschitz boundary $\partial\Omega$ partitioned into a Dirichlet part $\Gamma_D$ and a
Neumann part $\Gamma_N$. We isolate a transition point $P \in \overline{\Gamma}_D \cap \overline{\Gamma}_N$. We assume that near $P$, the boundary is locally
flat (an internal angle of $\pi$). We place the origin of a local polar coordinate system $(r, \theta)$ at $P$, such that $\Gamma_D$ aligns with $\theta = 0$
and $\Gamma_N$ aligns with $\theta = \pi$.

\begin{remark}[Global Smoothness vs.\ Local Geometry]
The assumption that the boundary is locally flat at the transition point $P$ means only that $\Gamma_D$ and $\Gamma_N$ meet with internal angle $\alpha = \pi$,
i.e.\ without a geometric corner; it is a statement about the opening angle, not about the curvature of $\partial\Omega$, and it is decoupled from the global
smoothness of $\partial\Omega$. A domain may possess a globally $C^1$ or Lipschitz-continuous boundary (e.g., a smooth circle or ellipse) while still presenting
such an angle-$\pi$ junction where the boundary conditions abruptly transition. Since the operator pencil \eqref{eq:pencil} and the leading exponents depend
only on the opening angle, the asymptotic expansion of Theorem~\ref{thm:resonance}, and in particular the logarithmic coefficient $A = -2\kappa/(3\pi)$, is a
leading-order ($r \to 0$) statement that holds at any such junction. When $\Gamma_D$ and $\Gamma_N$ are curved rather than straight, however, curvature enters as a
finite-radius correction: the local Laplacian is no longer exactly separable, so the measured coefficient departs from its flat-wedge value at any fixed $r > 0$
and converges to it only as $r \to 0$. This is precisely what the disc experiments of Section~\ref{sec:numerical_comparison} exhibit, where the extracted $A/c_0$
overshoots the flat-wedge prediction and is recovered exactly only on a genuinely straight junction. If the junction instead coincides with a physical corner of
angle $\alpha \neq \pi$, the leading linear singularity shifts, which alters the spectrum of the local Sturm-Liouville operator.
\end{remark}

We seek a weak solution $u \in H^1_D(\Omega)$ to the semilinear elliptic boundary value problem:
\begin{equation}
\label{eq:semilinear_pde}
-\Delta u + \mu u = f(x, u) + g(\nabla u) \quad \text{in } \Omega
\end{equation}
subject to the mixed boundary conditions $u = 0$ on $\Gamma_D$ and $\nabla u \cdot \nu = 0$ on $\Gamma_N$. To ensure the coercivity of the bilinear form and
guarantee the existence of a solution, we assume the coefficient $\mu \in L^\infty(\Omega)$ is essentially bounded from below by a strictly positive constant
$\mu_0 > 0$. Specifically, following the classical existence and regularity theory for second-order elliptic equations \cite{gilbarg2001} and, in particular,
for quasilinear problems with gradient-dependent nonlinearities \cite{boccardo1984, misur2021}, we require $\mu_0 > \mu^\ast$ for a threshold
$\mu^\ast = \mu^\ast(C_g) > 0$ depending only on the linear growth constant $C_g$; concretely $\mu^\ast = C_g + \tfrac{C_g^2}{2}$ suffices, above which the
composite operator is strongly monotone (Proposition~\ref{prop:quasiopt}) and hence admits, by the Browder--Minty theorem, a unique weak solution (existence
was also established, by independent means, in \cite{misur2021}). Here $C_g$ is the linear growth constant of the composite semilinear forcing
$F(x, u, \nabla u) := f(x, u) + g(\nabla u)$, such that
$|F(x, \lambda, \Lambda)| \le C_g(1 + |\lambda| + |\Lambda|)$. We take $C_g$ to bound also the Lipschitz constants of the nonlinearities,
$|f(x,\lambda_1) - f(x,\lambda_2)| \le C_g|\lambda_1 - \lambda_2|$ and $|g(\Lambda_1) - g(\Lambda_2)| \le C_g|\Lambda_1 - \Lambda_2|$; this is the structural
hypothesis under which the existence and coercivity theory applies, and it is the constant used in Proposition~\ref{prop:quasiopt}. Here, $H^1_D(\Omega)$ denotes
the closed subspace of $H^1(\Omega)$ consisting of functions whose traces vanish on $\Gamma_D$. In the weak sense, \eqref{eq:semilinear_pde} is satisfied
if the standard variational equality holds against all test functions $v \in H^1_D(\Omega)$ (as formalized later in Section \ref{sec:xfem_formulation}).

\begin{assumption}[Base Regularity of the Tame Forcing Term]
\label{ass:base_forcing}
The function $f(x, u)$ contains the spatial forcing and lower-order nonlinearities. We assume that evaluating $f$ at the leading-order linear singularity
$u_0 = c_0 r^{1/2} \sin(\theta/2)$ yields a function that is locally integrable to a power $p > 2$ near the origin:
\begin{equation}
\label{eq:forcing_bound}
f(x, u_0) \in L^p_{loc}(\Omega).
\end{equation}
This ensures that the corresponding residual forcing is sufficiently controlled, preventing interference with the leading resonant behavior.
\end{assumption}

\begin{remark}
The choice of the space in Assumption \ref{ass:base_forcing} dictates the nature of the remainder bound established in this article. If we restrict
$f(x, u_0) \in L^\infty_{loc}(\Omega)$, the residual forcing is pointwise $\mathcal{O}(1)$, yielding a strict pointwise remainder bound of
$\mathcal{R}(r, \theta) \in \mathcal{O}(r^2)$. However, if we utilize the broader assumption $f(x, u_0) \in L^p_{loc}(\Omega)$ for $p > 2$, standard elliptic
regularity theory restricts the $\mathcal{O}(r^2)$ shift to hold in the Sobolev sense, specifically ensuring that the remainder belongs to
$W^{2,p}_{loc}(\Omega)$.
\end{remark}

\begin{assumption}[Positive Homogeneity and Lipschitz Continuity]
\label{ass:gradient_penalty}
The function $g:\mathbf{R}^2\to\mathbf{R}$ representing the gradient dependency is globally Lipschitz continuous and positively homogeneous of degree $1$:
for every scalar $\alpha > 0$ and every vector $\mathbf{p}\in\mathbf{R}^2$,
\begin{equation}
\label{eq:homogeneity}
g(\alpha\mathbf{p}) = \alpha\,g(\mathbf{p}).
\end{equation}
Such a $g$ is completely determined by its continuous restriction to the unit circle; every vector norm and every anisotropic penalty discussed below satisfies
this hypothesis.
\end{assumption}

\begin{remark}[Generalization to Non-Homogeneous Penalties via Spatial Weighting]
Assumption \ref{ass:gradient_penalty} limits the gradient penalty to a homogeneous degree of $p=1$. This is because the unweighted resonance
requires the forcing term to scale as $\mathcal{O}(r^{-1/2})$. For a general penalty $g(\nabla u) \sim |\nabla u|^p$, the unweighted forcing scales as
$\mathcal{O}(r^{-p/2})$, which only resonates with the $\lambda = 3/2$ eigenvalue if $p=1$. However, this logarithmic anomaly can theoretically emerge for
$p \neq 1$ if the PDE includes a localized spatial weight $c(x)$, such that the term $c(r)g(\nabla u)$ scales appropriately. Specifically, resonance will still
occur if the spatial weight inherently obeys $c(r) \sim \mathcal{O}(r^{(p-1)/2})$ near the boundary junction.
\end{remark}

\begin{assumption}[The Non-Orthogonality Condition]
\label{ass:non_orthogonality}
Let $\mathbf{v}(\theta) = \frac{c_0}{2} \left( -\sin(\theta/2) \mathbf{e}_{x_1} + \cos(\theta/2) \mathbf{e}_{x_2} \right)$ be the exact angular vector field of
the gradient $\nabla u_0$. We evaluate the gradient penalty function along this vector field to generate the angular source profile
$G(\theta) = g(\mathbf{v}(\theta))$. For resonance to occur, this source profile must not be orthogonal to the homogeneous eigenfunction corresponding to
$\lambda = 3/2$ in $L^2(0,\pi)$:
\begin{equation}
\label{eq:orthogonality}
\int_0^\pi G(\theta) \sin\left(\frac{3\theta}{2}\right) d\theta = \kappa \neq 0.
\end{equation}
\end{assumption}

\begin{remark}[Sign normalisation of the stress-intensity coefficient]
\label{rem:sign}
The coefficient $c_0$ enters the source profile only through $\nabla u_0 = r^{-1/2}\mathbf v(\theta)$ with $\mathbf v = \tfrac{c_0}{2}\mathbf n(\theta)$,
$\mathbf n(\theta) = \big(-\sin(\theta/2),\cos(\theta/2)\big)$, $|\mathbf n|\equiv 1$; its sign merely orients the gradient. Throughout the explicit evaluations
that follow we normalise $c_0 > 0$, so that $\tfrac{c_0}{2} = \tfrac{|c_0|}{2}$; this is the configuration of the numerical examples of
Section~\ref{sec:numerical_comparison}, whose nonnegative data yield $u \ge 0$ by the maximum principle and hence, since $\sin(\theta/2) > 0$ on $(0,\pi)$, force
$c_0 \ge 0$. We take the non-degenerate case $c_0 > 0$, i.e.\ the standing hypothesis $c_0 \neq 0$ of Theorem~\ref{thm:resonance}, under which the anomaly is
present; the coefficient extraction of Section~\ref{sec:numerical_comparison} measures $c_0 > 0$ directly. For $c_0 < 0$ the penalty is evaluated on $-\mathbf n$: by
positive homogeneity $G(\theta) = g(\mathbf v) = \tfrac{|c_0|}{2}\,g\big(\operatorname{sgn}(c_0)\,\mathbf n(\theta)\big)$. For the symmetric norms of
Section~\ref{sec:application} ($g(-\cdot)=g(\cdot)$) this replaces $c_0$ by $|c_0|$ in every angular profile and resonance constant below, so the magnitudes
$\kappa$ and $|A|$ are unchanged, while the dimensionless ratio $A/c_0 = -2\kappa/(3\pi c_0)$ acquires a factor $\operatorname{sgn}(c_0)$. For the
sign-sensitive penalties of Section~\ref{sec:application} (the advective and difference-of-convex functionals) only the symmetric norm part scales as
$|c_0|$ while the linear part keeps its signed $c_0$, so e.g.\ $\kappa = \tfrac{|c_0|}{3} + \tfrac{c_0}{2}\beta_2$ and the cancelling transverse drift reflects
its sign accordingly.
\end{remark}

\subsection*{Examples of Admissible Functions}
To clarify the scope of these assumptions, we provide examples of forcing and penalty functions that fall within this defined framework:
\begin{itemize}
    \item \textbf{Admissible Base Forcing $f(x, u)$:} The semilinear term utilized in the numerical scheme by Mi\v{s}ur (2021), defined as
    $f(x, u) = x_1^2 + x_2^2 + \log(1+|u|)$, is admissible because it remains bounded or decays near the origin when evaluated at $u_0$ \cite{misur2021}.
    Further polynomial examples include $f(x, u) = c(x)|u|^q$ for $q > 0$ and $c \in L^\infty(\Omega)$, since the evaluation $u_0^q \sim r^{q/2}$ is guaranteed
    to remain in $L^p_{loc}(\Omega)$. Similarly, exponential nonlinearities common in reaction-diffusion equations (e.g., $f(x, u) = e^u$) are admissible
    because $u_0 \to 0$ as $r \to 0$, ensuring the term remains bounded locally.
    \item \textbf{Admissible Gradient Penalty $g(\nabla u)$:} The sum of the absolute values of the Cartesian partial derivatives,
    $g(\nabla u) = |\partial_{x_1} u| + |\partial_{x_2} u|$ (the $\ell_1$-norm), is strictly positively homogeneous of degree $1$ and is therefore
    admissible \cite{misur2021}. The standard Euclidean norm $g(\nabla u) = |\nabla u|$ ($\ell_2$-norm) and the maximum norm
    $g(\nabla u) = \max(|\partial_{x_1} u|, |\partial_{x_2} u|)$ ($\ell_\infty$-norm) naturally satisfy Assumption \ref{ass:gradient_penalty}.
    \item \textbf{Anisotropic Gradient Penalties $g(\nabla u)$:} In addition to standard isotropic norms, anisotropic metrics such as
    $g(\nabla u) = \sqrt{\lambda_1 |\partial_{x_1} u|^2 + \lambda_2 |\partial_{x_2} u|^2}$ are also admissible. More complex,
    positively homogeneous directional penalties satisfy Assumption \ref{ass:gradient_penalty} as well, provided they scale linearly with the
    gradient magnitude and satisfy the global Lipschitz constraint.
    \item \textbf{Advective (Non-Norm) Penalties $g(\nabla u)$:} The admissible class extends beyond symmetric norms. An advection-biased penalty
    $g(\nabla u) = |\nabla u| + \boldsymbol{\beta}\cdot\nabla u$, with fixed drift $\boldsymbol{\beta} = (\beta_1,\beta_2)\in\mathbf{R}^2$, is positively
    homogeneous of degree $1$ and globally Lipschitz (constant $1+|\boldsymbol{\beta}|$), hence admissible, yet it is not a norm, taking negative values
    when $|\boldsymbol{\beta}|>1$, and it models a dissipation biased along a preferred transport direction. Evaluating on $\mathbf{v}(\theta)$ gives the
    angular profile $G(\theta)=\frac{c_0}{2}\big(1-\beta_1\sin(\theta/2)+\beta_2\cos(\theta/2)\big)$, and projecting onto the resonant eigenfunction yields
    the tunable resonance constant $\kappa = \frac{c_0}{3}+\frac{c_0}{2}\beta_2$.
\end{itemize}

\begin{remark}[Sharpness of the non-orthogonality condition]
The advective penalty shows that Assumption~\ref{ass:non_orthogonality} is a genuine hypothesis rather than an automatic consequence of admissibility.
The along-junction drift $\beta_1$ leaves $\kappa$ untouched, since the profile $\sin(\theta/2)$ it contributes is $L^2$-orthogonal to $\sin(3\theta/2)$, whereas
the transverse drift $\beta_2$ shifts $\kappa$ linearly, and at the critical value $\beta_2=-\tfrac23$ the resonance constant vanishes. There the logarithmic
anomaly disappears and the solution regains a pure-power expansion at order $r^{3/2}$: a suitably tuned transport bias can cancel the singular pollution.
\end{remark}

\begin{proposition}[Well-posedness and a priori regularity]
\label{prop:wellposed}
Under the coercivity condition $\mu_0 > C_g + \tfrac{C_g^2}{2}$ and
Assumptions~\ref{ass:base_forcing}--\ref{ass:gradient_penalty}, the boundary value problem
\eqref{eq:semilinear_pde} possesses a unique weak solution $u \in H^1_D(\Omega)$, with existence and uniqueness following from the strong monotonicity of
Proposition~\ref{prop:quasiopt} via Browder--Minty (see also \cite{misur2021, boccardo1984}). Away from the finite set of mixed junctions the
solution enjoys the usual interior and boundary elliptic regularity, and near each junction $P$ the leading-order asymptotics
$u = c_0\, r^{1/2}\sin(\theta/2) + o(r^{1/2})$ hold as $r\to0$ for some coefficient $c_0 \in \mathbf{R}$, while $g(\nabla u) \in L^p_{loc}(\Omega)$ for every
$p < 4$.
\end{proposition}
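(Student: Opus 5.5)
The proof splits into three parts: well-posedness via monotonicity, a bootstrap to get the local singular expansion, and the integrability of $g(\nabla u)$.

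\textbf{Well-posedness.} Define $A: H^1_D(\Omega)\to H^1_D(\Omega)^\ast$ by
\[
\langle A u, v\rangle = \int_\Omega \nabla u\cdot\nabla v + \mu u v - F(x,u,\nabla u)\,v \,dx .
\]
Proposition~\ref{prop:quasiopt} may be assumed. It gives strong monotonicity,
\[
\langle Au-Aw,u-w\rangle \ge c\|u-w\|_{H^1}^2 .
\]
The underlying computation is short. The Lipschitz bounds on $f$ and $g$, combined with Young's inequality $C_g|\nabla e||e| \le \tfrac12|\nabla e|^2 + \tfrac{C_g^2}{2}|e|^2$, give
\[
\langle Au-Aw, e\rangle \ge \tfrac12\|\nabla e\|^2 + \big(\mu_0 - C_g - \tfrac{C_g^2}{2}\big)\|e\|^2 .
\]
The linear growth $|F|\le C_g(1+|\lambda|+|\Lambda|)$ makes $A$ bounded and hemicontinuous. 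Browder--Minty then yields a unique weak solution.

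\textbf{Regularity away from junctions.} Write the equation as $-\Delta u+\mu u = h$ with $h:=F(x,u,\nabla u)$, which lies in $L^2$ by linear growth. Standard interior and smooth-boundary regularity (Dirichlet or Neumann separately) then gives $u\in H^2$ there. Iterating with Sobolev embedding gives $W^{2,p}$ wherever the boundary is smooth and unmixed.

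\textbf{Leading-order asymptotics near a junction.} Localize with a cutoff $\chi$ supported near $P$ and consider the linear mixed problem for $\chi u$ on a half-disc. Its right-hand side is $\chi h$ plus commutator terms $2\nabla\chi\cdot\nabla u + u\Delta\chi$, all in $L^2$. Apply the Kondrat'ev/Grisvard decomposition for the mixed Laplacian in a half-plane with opening angle $\pi$. The operator pencil has eigenvalues $\lambda_k = k-\tfrac12$, $k\ge 1$, so the first admissible exponent is $\tfrac12$. Since $L^2$ data correspond to the weighted line $\mathrm{Re}\,\lambda = 1$, the only exponent strictly between $0$ and $1$ is $\tfrac12$. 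This gives
\[
\chi u = c_0 r^{1/2}\sin(\theta/2) + u_{\rm reg}, \qquad u_{\rm reg}\in H^2 .
\]
In two dimensions $H^2\subset C^{0,\alpha}$ for every $\alpha<1$, and $u_{\rm reg}(P)=0$ by the Dirichlet condition, so $u_{\rm reg}=O(r^{\alpha})=o(r^{1/2})$. This proves the claimed expansion.

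\textbf{Integrability of $g(\nabla u)$.} Lipschitz continuity and $g(0)=0$ (positive homogeneity) give $|g(\nabla u)|\le C|\nabla u|$, so it suffices to show $\nabla u\in L^p_{loc}$ for $p<4$.
\begin{itemize}
\item The singular part has $|\nabla(r^{1/2}\sin(\theta/2))|\sim r^{-1/2}$. Its $p$-th power $r^{-p/2}$ is integrable against $r\,dr$ exactly when $p<4$.
\item The regular part satisfies $\nabla u_{\rm reg}\in H^1\subset L^q$ for all $q<\infty$.
\end{itemize}
The $L^2$ data do not immediately give a remainder regular enough for the statement as written, so I would bootstrap. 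Since $\nabla u\in L^p$ for $p<4$, Assumption~\ref{ass:base_forcing} and the Lipschitz bound on $f$ give $h\in L^p$ for some $p>2$. Redo the decomposition in $W^{2,p}$ (Grisvard's $L^p$ theory). The critical line $\mathrm{Re}\,\lambda = 2-2/p$ lies in $(1,\tfrac32)$, so no exponent besides $\tfrac12$ enters, and the remainder lies in $W^{2,p}\subset C^1$. This confirms both the $o(r^{1/2})$ bound and $\nabla u_{\rm reg}$ bounded.

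\textbf{Main obstacle.} The delicate step is justifying the singular-function decomposition for the nonlinear right-hand side, because $h$ depends on $u$ itself. I expect to handle it by freezing $h$, which is legitimate since $u$ is already known, and applying the linear theory to the frozen problem. Care is needed with the weighted-space bookkeeping: check that $p<4$ keeps the critical line below $\tfrac32$, and that the coefficient $c_0$ is well defined, which follows from uniqueness of the decomposition.
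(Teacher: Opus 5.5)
Your proposal is correct and follows essentially the same route as the paper: Browder--Minty from the strong monotonicity of Proposition~\ref{prop:quasiopt}, then Grisvard's singular-function decomposition applied to the frozen $L^2$ right-hand side to get $u = c_0 r^{1/2}\sin(\theta/2) + u_{\mathrm{reg}}$ with $u_{\mathrm{reg}}\in H^2_{loc}$, and finally $|g(\nabla u)|\le C|\nabla u|$ combined with the $r^{-1/2}$ singular gradient and $\nabla u_{\mathrm{reg}}\in L^q_{loc}$ for all $q<\infty$. Your extra $W^{2,p}$ bootstrap is not needed for the statement, since the $H^2$ remainder already suffices, as your own bullet points show; it does, however, make $\nabla u_{\mathrm{reg}}$ bounded and so justifies the pointwise scaling $|\nabla u|\sim |c_0|\, r^{-1/2}$ that the paper asserts without proof.
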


\begin{proof}
Existence of a weak solution $u\in H^1_D(\Omega)$ under the stated coercivity, and its uniqueness, follow from the strong monotonicity and Lipschitz
continuity of the operator $\mathcal{A}$ established in the proof of Proposition~\ref{prop:quasiopt}, via the Browder--Minty theorem (existence was also
obtained by independent means in \cite{misur2021}, see also \cite{boccardo1984, gilbarg2001}).
For the local structure near a junction $P$, the linear growth of $g$ (Assumption~\ref{ass:gradient_penalty}) together with $u\in H^1$ gives
$|g(\nabla u)|\le C|\nabla u|\in L^2_{loc}$, so the frozen right-hand side $\tilde F := f(x,u)+g(\nabla u)-\mu u$ lies in $L^2_{loc}(\Omega)$. The classical
singular-function expansion for the mixed Dirichlet--Neumann Laplacian with $L^2$ data \cite{grisvard1985} then yields, for some stress-intensity coefficient
$c_0\in\mathbf{R}$,
\begin{equation*}
u = c_0\, r^{1/2}\sin(\theta/2) + u_{\mathrm{reg}},\qquad u_{\mathrm{reg}}\in H^2_{loc},
\end{equation*}
whence $u = c_0\, r^{1/2}\sin(\theta/2) + o(r^{1/2})$. A single bootstrap sharpens the gradient: since
$\nabla u_{\mathrm{reg}}\in H^1_{loc}\hookrightarrow L^q_{loc}$ for every $q<\infty$, while $|\nabla(r^{1/2}\sin(\theta/2))|\sim \tfrac12 r^{-1/2}$,
the solution gradient inherits the singular scaling $|\nabla u|\sim |c_0|\, r^{-1/2}$ near $P$. By the linear growth of $g$ it follows that
$g(\nabla u)\in L^p_{loc}(\Omega)$ for every $p<4$, the exponent being that of $r^{-1/2}$ and hence optimal. This a priori regularity is the input to the
local asymptotic analysis of Section~\ref{sec:theorem}.
\end{proof}

\section{The Generalized Resonance Theorem}
\label{sec:theorem}

Near the junction $P$ the local analysis is naturally carried out in Kondrat'ev weighted Sobolev spaces. For a punctured neighbourhood $U$ of $P$, an integer
$l\ge0$, a weight $\beta\in\mathbf{R}$ and $1<p<\infty$, let $V^{l}_{\beta,p}(U)$ denote the completion of $C_c^\infty(\overline{U}\setminus\{P\})$ with respect
to
\begin{equation*}
\|v\|_{V^{l}_{\beta,p}(U)} = \Bigg(\sum_{|\gamma|\le l}\int_U r^{\,p(\beta - l + |\gamma|)}\,|D^\gamma v|^p\,dx\Bigg)^{1/p}.
\end{equation*}
Separating variables in the principal part $-\Delta$ under the mixed conditions $\Theta(0)=\Theta'(\pi)=0$ yields the operator pencil
$\Theta'' + \lambda^2\Theta = 0$, whose spectrum is the half-integer ladder
\begin{equation}
\label{eq:pencil}
\lambda_k = k + \tfrac12,\qquad \Theta_k(\theta) = \sin\!\big((k+\tfrac12)\theta\big),\qquad k = 0,1,2,\dots
\end{equation}
The linear corner-asymptotics theory of Kondrat'ev and of Kozlov--Maz'ya--Rossmann \cite{kondratev1967, kmr1997, dauge1988} guarantees that a solution of
$-\Delta w = h$ with datum $h$ in a weighted class $V^{0}_{\beta,p}$ expands into the singular functions $r^{\lambda_k}\Theta_k(\theta)$ for the eigenvalues
$\lambda_k$ crossed as the weight is shifted, modulo a remainder controlled in the target space; a logarithmic factor $r^{\lambda_k}\ln r$ arises exactly when
$h$ scales as $r^{\lambda_k - 2}$, that is, when the forcing exponent coincides with a pencil eigenvalue. The theorem below is the realisation of this dichotomy
for the gradient-induced forcing. The local analysis treats the resonant singular datum and the tame residual separately, by different exponents. For the
norm above, $r^{\lambda}\in V^{2}_{\beta,p}$ (equivalently $r^{\lambda-2}\in V^{0}_{\beta,p}$) precisely when
$\lambda > 2-\beta-2/p$, so membership in $V^{0}_{\beta,p}$ of a datum scaling as $r^{\lambda-2}$ is exactly the condition that the critical line
$\{\operatorname{Re}\lambda = 2-\beta-2/p\}$ lie below $\lambda$. The strongly singular gradient datum $r^{-1/2}G = r^{\lambda_1-2}G$ with $\lambda_1=3/2$
therefore belongs to $V^{0}_{\beta,p}$ only when the line sits below $3/2$, i.e.\ $\beta+2/p>1/2$; in the unweighted case $\beta=0$ this is exactly the
$L^p_{loc}$ membership with $p<4$ recorded in Proposition~\ref{prop:wellposed}. Because this datum sits on the eigenvalue $\lambda_1=3/2$ it is resonant,
and rather than being inverted through the Fredholm machinery it is removed by the explicit construction of Theorem~\ref{thm:resonance} (the source of the
logarithm). Only after this subtraction are the weighted classes applied to the non-singular residual equation for $\mathcal R$, and there the natural
choice is the unweighted exponent $\beta=0$: then $V^{0}_{0,p}=L^p_{loc}$, so the residual datum lies in the space by definition. No negative weight would
serve, since a bounded inclusion $L^p_{loc}\hookrightarrow V^{0}_{\beta,p}$ forces the multiplier $r^{p\beta}$ to stay bounded near $P$, i.e.\ $\beta\ge0$;
the weaker condition $\int_0^R r^{p\beta+1}\,dr<\infty$ (i.e.\ $\beta>-2/p$) places only the bounded data $L^\infty_{loc}$ in $V^{0}_{\beta,p}$, not $L^p_{loc}$
(a slowly decaying datum such as $r^{-2/p}(-\ln r)^{-2/p}\in L^p_{loc}$ leaves $V^{0}_{\beta,p}$ for every $\beta<0$). At $\beta=0$ the critical line is
$\operatorname{Re}\lambda = 2-2/p\in(1,2)$ for $p>2$, meeting the pencil \eqref{eq:pencil} only at $p=4$ (where it equals $3/2$); for every other $p>2$ it
avoids the spectrum, so $-\Delta$ is Fredholm from $V^{2}_{0,p}$ into $V^{0}_{0,p}=L^p_{loc}$, and the response lies in $V^{2}_{0,p}$ modulo the singular
functions $r^{\lambda_k}\Theta_k$ with $\lambda_k<2-2/p$ (only $\lambda_0=\tfrac12$ when $2<p<4$, and both $\tfrac12$ and $\tfrac32$ when $p>4$). These modes
have all been subtracted from $\mathcal R$ by construction, and the pencil's next exponent is $\tfrac52>2$; hence the residual carries no singular function at
or below $2-2/p<2$, whence $\mathcal R\in V^{2}_{0,p}\subset W^{2,p}_{loc}$ for every $p\in(2,4)\cup(4,\infty)$; the borderline $p=4$ is recovered through any slightly smaller exponent,
its datum lying in $L^{p'}_{loc}$ for all $p'<4$. The bounded case $f(x,u_0)\in L^\infty_{loc}$ needs no $p=\infty$ class (excluded by the definition
$1<p<\infty$), since $L^\infty_{loc}\subset L^p_{loc}$ for every finite $p$, and the pointwise $\mathcal{O}(r^2)$ bound of Lemma~\ref{lem:regularity} follows
from the $\mathcal{O}(1)$-datum scaling rather than from a weighted estimate at $p=\infty$.

\begin{theorem}[Logarithmic Resonance at Mixed Boundary Junctions]
\label{thm:resonance}
Let $u \in H^1_D(\Omega)$ be a weak solution of \eqref{eq:semilinear_pde} with the a priori regularity of Proposition~\ref{prop:wellposed}, and let
$c_0 \in \mathbf{R}$ be its stress-intensity coefficient at the junction $P$, so that $u = c_0\, r^{1/2}\sin(\theta/2) + o(r^{1/2})$ as $r\to0$.
Suppose Assumptions~\ref{ass:base_forcing}--\ref{ass:non_orthogonality} hold and $c_0 \neq 0$. Then no expansion of $u$ in the pure singular powers
$\{r^{k+1/2}\Theta_k\}$ can satisfy \eqref{eq:semilinear_pde} to order $\mathcal{O}(r^{3/2})$, and the solution admits near $P$ the local asymptotic expansion
\begin{equation}
\label{eq:expansion}
u(r, \theta) = c_0 r^{1/2} \sin\left(\frac{\theta}{2}\right) - \frac{2\kappa}{3\pi} r^{3/2} \ln(r) \sin\left(\frac{3\theta}{2}\right) + r^{3/2}\left(\Psi(\theta) + c_1 \sin\left(\frac{3\theta}{2}\right)\right) + \mathcal{R}(r, \theta),
\end{equation}
where $\kappa$ is the resonance constant \eqref{eq:orthogonality}, $\Psi$ is the unique solution of the boundary value problem
$\Psi'' + \frac{9}{4}\Psi = -G(\theta) + \frac{2\kappa}{\pi}\sin(\frac{3\theta}{2})$ with $\Psi(0)=\Psi'(\pi)=0$ subject to $L^2$-orthogonality against
$\sin(3\theta/2)$, and $c_1 \in \mathbf{R}$ is a second stress-intensity coefficient, the amplitude of the homogeneous $\lambda_1 = 3/2$ singular
mode $r^{3/2}\sin(3\theta/2)$, fixed by the global solution rather than by the local analysis. The remainder $\mathcal{R}$ satisfies the bounds of
Lemma~\ref{lem:regularity}.
\end{theorem}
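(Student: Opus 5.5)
The plan is to insert the ansatz \eqref{eq:expansion} into \eqref{eq:semilinear_pde}, match the forcing order by order in $r$, and then control the remainder using the weighted Fredholm theory described before the theorem.

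\textbf{Step 1: identify the resonant forcing.} Write $u = u_0 + w$ with $u_0 = c_0 r^{1/2}\sin(\theta/2)$. By Proposition~\ref{prop:wellposed}, $\nabla u = r^{-1/2}\mathbf v(\theta) + \nabla w$, where $\nabla w$ is of lower order. The Lipschitz bound of Assumption~\ref{ass:gradient_penalty} gives
\[
|g(\nabla u) - g(r^{-1/2}\mathbf v)| \le C_g|\nabla w|.
\]
Positive homogeneity gives $g(r^{-1/2}\mathbf v) = r^{-1/2}G(\theta)$. Hence
\[
-\Delta w = r^{-1/2}G(\theta) + \tilde h, \qquad \tilde h := f(x,u) - \mu u + \big(g(\nabla u) - g(\nabla u_0)\big),
\]
and the singular datum is exactly $r^{\lambda_1-2}G$. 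Here we use $-\Delta u_0 = 0$ together with the mixed boundary conditions.

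\textbf{Step 2: show that pure powers fail.} Suppose $w$ had the form $r^{3/2}\Phi(\theta)$ plus higher-order terms. Then $\Phi$ would solve
\[
\Phi'' + \tfrac94\Phi = -G, \qquad \Phi(0) = \Phi'(\pi) = 0.
\]
The operator $\Phi \mapsto \Phi'' + \tfrac94\Phi$ with these conditions is self-adjoint and has kernel spanned by $\sin(3\theta/2)$. By the Fredholm alternative, solvability forces $\int_0^\pi G\sin(3\theta/2)\,d\theta = 0$. This contradicts Assumption~\ref{ass:non_orthogonality}, since that integral equals $\kappa \neq 0$.

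\textbf{Step 3: construct the logarithmic correction.} Try $w_1 = r^{3/2}\big(A\ln r\,\sin(3\theta/2) + \Psi(\theta)\big)$. A direct computation gives
\[
\Delta\big(r^{3/2}\ln r\,\Theta_1\big) = 3\,r^{-1/2}\,\Theta_1,
\]
because the $\ln r$ terms cancel via the eigen-equation and $\partial_r^2 + r^{-1}\partial_r$ acting on $r^{3/2}\ln r$ leaves $(2\cdot\tfrac32)\,r^{-1/2}$. Therefore $-\Delta w_1 = r^{-1/2}G$ reduces to
\[
\Psi'' + \tfrac94\Psi = -G - 3A\sin(3\theta/2).
\]
Project onto $\sin(3\theta/2)$, whose squared norm is $\pi/2$: solvability requires $\kappa + 3A\pi/2 = 0$, so $A = -2\kappa/(3\pi)$. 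The right-hand side then becomes $-G + \tfrac{2\kappa}{\pi}\sin(3\theta/2)$, matching the statement. Fredholm solvability then yields $\Psi$, which is unique once we impose orthogonality to the kernel. Continuity of $G$ gives $\Psi \in C^2[0,\pi]$. The kernel direction is free, and it produces the homogeneous mode $c_1 r^{3/2}\sin(3\theta/2)$.

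\textbf{Step 4: the remainder (main obstacle).} Set
\[
\mathcal R := u - u_0 - w_1 - c_1 r^{3/2}\sin(3\theta/2).
\]
It solves $-\Delta\mathcal R = \tilde h$ with mixed boundary conditions. The delicate point is that $\tilde h$ contains $g(\nabla u) - g(\nabla u_0)$. The Lipschitz bound controls this only by $|\nabla w| \sim r^{1/2}|\ln r|$, which includes the contribution of $\mathcal R$ itself, so the argument is circular.

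I would break the circularity with a bootstrap in the weighted spaces $V^l_{\beta,p}$.
\begin{enumerate}
\item First, Proposition~\ref{prop:wellposed} already gives $\nabla w \in H^1_{loc}$, so $\tilde h \in L^p_{loc}$ for every finite $p$. This uses Assumption~\ref{ass:base_forcing} together with the Lipschitz continuity of $f$ to compare $f(x,u)$ with $f(x,u_0)$.
\item Next, apply the Fredholm statement for $-\Delta : V^2_{0,p} \to L^p_{loc}$ with $p \neq 4$. This decomposes $\mathcal R$ into an element of $V^2_{0,p}$ plus multiples of $r^{1/2}\Theta_0$ and $r^{3/2}\Theta_1$.
\item These singular multiples vanish by the definition of $c_0$ and $c_1$: $c_1$ is fixed as the coefficient of the $\lambda_1$ mode.
\item Hence $\mathcal R \in W^{2,p}_{loc}$ for $p \in (2,4)\cup(4,\infty)$. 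The case $p = 4$ follows from any smaller exponent.
\end{enumerate}
When $f(x,u_0) \in L^\infty_{loc}$, the datum is $\mathcal O(1)$, and a scaling argument on dyadic annuli gives the pointwise $\mathcal O(r^2)$ bound of Lemma~\ref{lem:regularity}. I expect the hardest part to be making the first bootstrap step rigorous, namely keeping the Lipschitz-perturbation term in $L^p$ uniformly while the logarithmic term is subtracted.
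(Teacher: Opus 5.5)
Your proposal is correct and follows essentially the same route as the paper. You split off the resonant datum $r^{-1/2}G(\theta)$ via positive homogeneity and the Lipschitz bound, and rule out a separable $r^{3/2}$ profile by the Fredholm alternative. You then fix $A=-2\kappa/(3\pi)$ from the solvability of the $\Psi$-equation and control $\mathcal{R}$ by the weighted Kondrat'ev theory, with the $H^2_{loc}$ regularity of $u-u_0$ from Proposition~\ref{prop:wellposed} removing the apparent circularity and $c_1$ defined as the amplitude of the subtracted $\lambda_1=3/2$ mode.

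One small point. Assumption~\ref{ass:base_forcing} alone gives $\tilde h\in L^p_{loc}$ only for the given $p>2$, hence $\mathcal{R}\in W^{2,p}_{loc}$ for that $p$ as in Lemma~\ref{lem:regularity}. Your ``every finite $p$'' is still true here, but for a different reason: the standing bound $|F(x,\lambda,\Lambda)|\le C_g(1+|\lambda|+|\Lambda|)$ together with $g(0)=0$ gives $|f(x,u)|\le C_g(1+|u|)$, which is locally bounded.
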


\begin{proof}
By Proposition~\ref{prop:wellposed}, in a punctured neighbourhood $U$ of $P$ the solution satisfies the linear identity $-\Delta u = \tilde F$ with the mixed
boundary data, where $\tilde F := f(x,u) + g(\nabla u) - \mu u$. Write $u_0 := c_0 r^{1/2}\sin(\theta/2)$; its gradient is $\nabla u_0 = r^{-1/2}\mathbf v(\theta)$
exactly, with $\mathbf v$ as in Assumption~\ref{ass:non_orthogonality}. By the positive $1$-homogeneity of $g$ (Assumption~\ref{ass:gradient_penalty}),
\begin{equation*}
g(\nabla u_0) = r^{-1/2}\,g(\mathbf v(\theta)) = r^{-1/2}G(\theta),
\end{equation*}
so that $\tilde F$ separates into an explicit singular part and a lower-order remainder,
\begin{equation}
\label{eq:F_split}
\tilde F = r^{-1/2}G(\theta) + \tilde F_{\mathrm{reg}},\qquad
\tilde F_{\mathrm{reg}} := f(x,u) - \mu u + \big(g(\nabla u) - g(\nabla u_0)\big).
\end{equation}
Here $f(x,u)\in L^p_{loc}$ for some $p>2$ (Assumption~\ref{ass:base_forcing}), $\mu u\in L^\infty_{loc}$, and, by the Lipschitz bound of
Assumption~\ref{ass:gradient_penalty}, $|g(\nabla u)-g(\nabla u_0)|\le C_g\,|\nabla(u-u_0)|$; since $u-u_0\in H^2_{loc}$ by
Proposition~\ref{prop:wellposed}, its gradient satisfies $\nabla(u-u_0)\in H^1_{loc}\hookrightarrow L^q_{loc}$ for every $q<\infty$, so this difference
carries no singularity as strong as $r^{-1/2}$ and belongs to the weighted class of exponent $3/2$ with room to spare. Hence $\tilde F_{\mathrm{reg}}$
lies in the weighted class $V^{0}_{\beta,p}$ associated with the exponent $3/2$, and the local behaviour of $u$ is governed by the response
to the singular datum $r^{-1/2}G(\theta)$.

That response is dictated by the pencil \eqref{eq:pencil}. Seeking a particular solution of $-\Delta u_1 = r^{-1/2}G(\theta)$ in the separable
form $u_1 = r^{3/2}\Theta_1(\theta)$ and applying the polar Laplacian,
\begin{equation}
\label{eq:laplacian_polar}
-\Delta(r^{3/2}\Theta_1) = -r^{-1/2} \left( \Theta_1''(\theta) + \frac{9}{4}\Theta_1(\theta) \right),
\end{equation}
reduces the problem to $\Theta_1'' + \frac{9}{4}\Theta_1 = -G$ with $\Theta_1(0) = 0$, $\Theta_1'(\pi) = 0$. The homogeneous problem has the eigenfunction
$\Theta_1(\theta) = \sin(3\theta/2)$ belonging to $\lambda_1 = 3/2$; since the forcing exponent obeys $-\tfrac12 + 2 = \tfrac32 = \lambda_1$, this is
precisely the resonant configuration of \eqref{eq:pencil}. By the Fredholm alternative a bounded separable solution exists only if $G \perp_{L^2} \sin(3\theta/2)$,
whereas Assumption~\ref{ass:non_orthogonality} gives $\int_0^\pi G(\theta)\sin(3\theta/2)\,d\theta = \kappa \neq 0$. Consequently no combination of the pure
singular functions $r^{k+1/2}\Theta_k$ reproduces the datum at order $r^{3/2}$, and the resonant response must carry a logarithmic factor.

Accordingly we take the modified ansatz
\begin{equation}
\label{eq:modified_ansatz}
u_1(r, \theta) = r^{3/2} \ln(r) \Phi(\theta) + r^{3/2} \Psi(\theta).
\end{equation}
Applying the polar Laplacian and separating by radial scaling,
\begin{equation}
\label{eq:laplacian_split}
-\Delta u_1 = -r^{-1/2} \ln(r) \left( \Phi'' + \frac{9}{4}\Phi \right) - r^{-1/2} \left( \Psi'' + \frac{9}{4}\Psi + 3\Phi \right),
\end{equation}
and matching against $r^{-1/2}G(\theta)$ decouples the system
\begin{align}
\Phi'' + \frac{9}{4}\Phi &= 0, \label{eq:ode_phi} \\
\Psi'' + \frac{9}{4}\Psi + 3\Phi &= -G(\theta). \label{eq:ode_psi}
\end{align}
From \eqref{eq:ode_phi} with the mixed conditions, $\Phi(\theta) = A \sin(3\theta/2)$; inserting this into \eqref{eq:ode_psi} gives
$\Psi'' + \frac{9}{4}\Psi = -G(\theta) - 3A \sin(3\theta/2)$, whose solvability requires, by the Fredholm alternative once more,
\begin{equation}
\label{eq:fredholm_2}
\int_0^\pi \left( -G(\theta) - 3A \sin\left(\frac{3\theta}{2}\right) \right) \sin\left(\frac{3\theta}{2}\right) d\theta = 0,
\end{equation}
that is $-\kappa - 3A (\pi/2) = 0$, whence $A = -\frac{2\kappa}{3\pi}$. This fixes the logarithmic amplitude and the offset profile. That this offset problem, the
ODE \eqref{eq:ode_psi} under $\Psi(0)=0$, $\Psi'(\pi)=0$ together with the orthogonality normalisation, is not over-determined follows from its resonant structure:
the operator $-\partial_\theta^2 - \tfrac94$ with the mixed data is self-adjoint with one-dimensional kernel $\sin(\tfrac{3\theta}{2})$, which satisfies both boundary
conditions and hence constrains neither, leaving it free to carry the orthogonality normalisation, while the Neumann condition becomes consistent with the Dirichlet
condition precisely through the solvability identity \eqref{eq:fredholm_2} just verified. The particular
solution of \eqref{eq:ode_psi} is itself determined only up to the homogeneous pencil mode $\sin(3\theta/2)$; normalising $\Psi$ by $L^2$-orthogonality
against $\sin(3\theta/2)$ isolates a residual amplitude $c_1 \in \mathbf{R}$ of that mode, which the separable local analysis leaves free and which is
fixed instead by matching to the global weak solution. Absorbing this homogeneous $\lambda_1 = 3/2$ mode into the leading singular part is essential: were it left in the remainder, $\mathcal{R}$ would carry an
$\mathcal{O}(r^{3/2})$ term, whose second derivatives scale as $r^{-1/2}$. Such a term lies in $W^{2,p}_{loc}$ only for $p < 4$ and is not $\mathcal{O}(r^2)$;
it would thus cap the remainder regularity below the $p > 4$ range of the $W^{2,p}$ bound of Lemma~\ref{lem:regularity} and destroy the pointwise
$\mathcal{O}(r^2)$ bound of its $L^\infty$ case (though, we note, it would remain admissible for the $W^{2,p}$ bound in the range $p \in (2,4)$).

Finally, setting $u_1 := r^{3/2}\ln(r)\,\Phi(\theta) + r^{3/2}\big(\Psi(\theta) + c_1\sin(3\theta/2)\big)$ with $\Phi,\Psi$ as above and $c_1$ the global
amplitude just described, the remainder $\mathcal{R} := u - u_0 - u_1$ solves the residual equation of Lemma~\ref{lem:regularity},
in which the singular datum $r^{-1/2}G$ and $-\Delta u_1$ cancel by construction (the added homogeneous mode being harmonic and hence invisible to
$-\Delta u_1$). Since $\mathcal{R}$ now retains no singular contribution at exponents $1/2$ or $3/2$, both the resonant particular part and the
homogeneous $\lambda_1 = 3/2$ mode having been subtracted, and no pencil eigenvalue lies in the interval $(3/2, 5/2)$,
the linear corner-asymptotics theory \cite{kmr1997, dauge1988} applied to $-\Delta u = \tilde F$ with the singular part removed places $\mathcal{R}$
in the weighted space $V^{2}_{0,p}$ and furnishes the bounds of Lemma~\ref{lem:regularity}. This proves the expansion \eqref{eq:expansion};
the Fredholm obstruction above is exactly the failure of any pure-power expansion at order $r^{3/2}$.
\end{proof}

\begin{remark}[Reference length in the logarithmic argument]
\label{rem:refscale}
Since the radial coordinate $r$ carries the physical dimension of length, the symbol $\ln r$ in \eqref{eq:expansion} abbreviates $\ln(r/R_0)$ for a fixed
reference length $R_0$ that non-dimensionalizes $r$; this is the sole role of $R_0$. Replacing $R_0$ by $R_0'$ changes $\ln(r/R_0)$ by the constant
$-\ln(R_0'/R_0)$; since the term carries the coefficient $-\tfrac{2\kappa}{3\pi}$, it shifts by $\tfrac{2\kappa}{3\pi}\ln(R_0'/R_0)\,r^{3/2}\sin(\tfrac{3\theta}{2})$, a multiple of the homogeneous mode
$r^{3/2}\sin(\tfrac{3\theta}{2})$ that is absorbed into the free amplitude $c_1$. The logarithmic coefficient $A=-2\kappa/(3\pi)$ is thus independent of $R_0$
and is the invariantly defined quantity extracted in Section~\ref{sec:numerical_comparison} (where $R_0=1$, the disc radius, so that the fitted intercept $B$
carries the scale-dependence), whereas the pure-power amplitude $c_1$ is meaningful only relative to the chosen $R_0$.
\end{remark}

\begin{lemma}[Regularity of the remainder]
\label{lem:regularity}
Let $\mathcal{R} = u - u_0 - u_1$, with $u_0, u_1$ as in Theorem~\ref{thm:resonance}. If $f(x, u_0) \in L^p_{loc}(\Omega)$ for some $p \in (2,\infty)\setminus\{4\}$, then
$\mathcal{R} \in W^{2,p}_{loc}(\Omega)$; in particular $\mathcal{R} \in C^{1,\gamma}_{loc}$ with $\gamma = 1 - 2/p$. At the exceptional exponent $p=4$, where the
critical line $2-2/p=3/2$ meets the pencil \eqref{eq:pencil} and the unweighted Fredholm estimate is unavailable, the same argument yields the marginally
weaker conclusion $\mathcal{R} \in W^{2,p'}_{loc}(\Omega)$ for every $p'<4$, hence $\mathcal{R} \in C^{1,\gamma}_{loc}$ for every $\gamma<1/2$.
If moreover $f(x, u_0) \in L^\infty_{loc}(\Omega)$, then $\mathcal{R}(r,\theta) = \mathcal{O}(r^2)$ as $r \to 0$.
\end{lemma}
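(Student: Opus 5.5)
We begin by writing down the residual equation that $\mathcal R$ satisfies in a punctured half-disc $U$ around $P$. By construction of $u_0$ and $u_1$ in the proof of Theorem~\ref{thm:resonance} we have $-\Delta u_0 = 0$. The term $-\Delta u_1$ cancels the singular datum $r^{-1/2}G(\theta)$ exactly, by \eqref{eq:laplacian_split} together with \eqref{eq:ode_phi}--\eqref{eq:ode_psi}. The homogeneous mode $c_1 r^{3/2}\sin(3\theta/2)$ is harmonic. Hence
\begin{equation*}
-\Delta \mathcal R = \tilde F_{\mathrm{reg}} = f(x,u) - \mu u + \big(g(\nabla u) - g(\nabla u_0)\big) \quad\text{in } U,
\end{equation*}
with $\mathcal R = 0$ on $\theta=0$ and $\partial_\theta \mathcal R = 0$ on $\theta=\pi$. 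Each of $u_0$ and $u_1$ satisfies the mixed conditions separately, because $\sin(\theta/2)$, $\sin(3\theta/2)$ and $\Phi,\Psi$ obey $\Theta(0)=\Theta'(\pi)=0$.

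The first step is to show $\tilde F_{\mathrm{reg}}\in L^p_{loc}$ for the given $p$.
\begin{itemize}
\item The term $\mu u$ is bounded, since $u = \mathcal O(r^{1/2})$.
\item For $f(x,u)$, I would compare with $f(x,u_0)$ using the Lipschitz bound $|f(x,u)-f(x,u_0)|\le C_g|u-u_0|$. Here $u-u_0\in H^2_{loc}\subset L^\infty_{loc}$, so $f(x,u)\in L^p_{loc}$ follows from the hypothesis $f(x,u_0)\in L^p_{loc}$.
\item For the gradient difference, Assumption~\ref{ass:gradient_penalty} gives $|g(\nabla u)-g(\nabla u_0)|\le C_g|\nabla(u-u_0)|$, with $\nabla(u-u_0)\in H^1_{loc}\subset L^q_{loc}$ for all $q<\infty$.
\end{itemize}
Thus $\tilde F_{\mathrm{reg}}\in L^p_{loc}=V^0_{0,p}$.

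Next I apply the Kondrat'ev/Kozlov--Maz'ya--Rossmann theory described before Theorem~\ref{thm:resonance}. For $p\neq4$ the line $\operatorname{Re}\lambda=2-2/p$ avoids the pencil \eqref{eq:pencil}. The solution therefore splits as a $V^2_{0,p}$ part plus singular functions $r^{\lambda_k}\Theta_k$ with $\lambda_k<2-2/p$, that is only $\lambda_0=1/2$ and possibly $\lambda_1=3/2$. These coefficients must vanish for $\mathcal R$:
\begin{itemize}
\item The $r^{1/2}$ coefficient vanishes because $\mathcal R = u - c_0r^{1/2}\sin(\theta/2) - \mathcal O(r^{3/2}\ln r) = o(r^{1/2})$, by Proposition~\ref{prop:wellposed}.
\item The $r^{3/2}$ coefficient vanishes because $c_1$ is defined as the full amplitude of that mode.
\end{itemize}
Hence $\mathcal R\in V^2_{0,p}\subset W^{2,p}_{loc}$. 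Morrey's embedding in two dimensions then gives $C^{1,1-2/p}$. Away from $P$, ordinary interior and boundary regularity suffices, and a cutoff patches the two regions together.

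The main obstacle is an inconsistency in the bootstrap. The a priori $u-u_0\in H^2$ only gives $\nabla(u-u_0)\in L^q$ for all finite $q$, and that is enough. The subtler point is that $c_1$ must be well defined before $p>4$ is reached. I would therefore proceed in two passes:
\begin{enumerate}
\item Take $p'\in(2,4)$. The datum lies in $L^{p'}$, the theory yields a $V^2_{0,p'}$ remainder after removing $r^{1/2}$, and the $r^{3/2}$ behaviour is not yet separated.
\item Raise the exponent above $4$. Now the $\lambda_1$ mode appears explicitly in the expansion, and its coefficient defines $c_1$. This is consistent with the theorem's statement that $c_1$ is fixed globally.
\end{enumerate}
At $p=4$ I simply use $L^4_{loc}\subset L^{p'}_{loc}$ for $p'<4$ and run the first pass, which gives $W^{2,p'}$ for all $p'<4$ and $C^{1,\gamma}$ for all $\gamma<1/2$.

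For the $L^\infty$ case, $\tilde F_{\mathrm{reg}}$ is bounded near $P$, though this needs $|\nabla(u-u_0)|$ to be bounded. I would get that from the $p>4$ step, which gives $\nabla\mathcal R$ continuous, while $\nabla u_1 = \mathcal O(r^{1/2}|\ln r|)$. I would then use a dyadic scaling argument. Set $\mathcal R_\rho(y)=\mathcal R(\rho y)$ on the annular half-region $1/2<|y|<2$. Local $W^{2,p}$ estimates for mixed problems on this fixed reference domain, combined with the weighted estimate $\|\mathcal R\|_{V^2_{0,p}}$ restricted to dyadic annuli, give
\begin{equation*}
\sup_{r\sim\rho}|\mathcal R|\lesssim \rho^{2}\big(\|\tilde F_{\mathrm{reg}}\|_{L^\infty}+\rho^{-2}\|\mathcal R\|_{L^\infty(r\sim\rho)}\big).
\end{equation*}
Closing this requires that the weighted theory with weight $\beta$ slightly negative places the line just below $2$, with no eigenvalue in $(3/2,5/2)$. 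That yields $\mathcal R=\mathcal O(r^{2})$, possibly up to a logarithm that I would try to remove by exploiting the spectral gap.
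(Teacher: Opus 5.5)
\textbf{Assessment.} Your argument for the $W^{2,p}$ and $p=4$ assertions is sound and follows essentially the paper's route. The datum lies in $L^p_{loc}=V^0_{0,p}$, the unweighted critical line $2-2/p$ misses the half-integer pencil for $p\neq4$, and the $\lambda_0,\lambda_1$ coefficients vanish. You also make explicit three points the paper leaves implicit: the Lipschitz passage from $f(x,u)$ to $f(x,u_0)$, the direct use of $u-u_0\in H^2_{loc}$ in place of the paper's $H^1\to H^2$ bootstrap on $\mathcal R$, and the two-pass definition of $c_1$.

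\textbf{The gap: the pointwise bound.} You do not actually reach the final claim, $\mathcal R=\mathcal O(r^2)$ for bounded $f(x,u_0)$.

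\emph{The dyadic inequality is circular.} Its right-hand side is $\rho^2\|\tilde F_{\mathrm{reg}}\|_{L^\infty}+\|\mathcal R\|_{L^\infty(r\sim\rho)}$. This contains the quantity being estimated, with no gain, so it carries no decay information.

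\emph{The weighted route cannot reach exponent $2$.} A bounded datum lies in $V^0_{\beta,p}$ only for $\beta>-2/p$, so the critical line $2-\beta-2/p$ stays strictly below $2$. Rescaling $\|\mathcal R\|_{V^2_{\beta,p}}$ over dyadic annuli then gives only $|\mathcal R|\lesssim r^{2-\beta-2/p}$, i.e.\ $\mathcal O(r^{2-\varepsilon})$ for every $\varepsilon>0$. At the borderline $\beta=-2/p$ the datum leaves the space logarithmically, which is exactly your ``possible logarithm''. You leave that loss unremoved, so the $L^\infty$ statement is not proved.

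\textbf{The missing idea.} Use non-resonance at the exponent $2$ itself, rather than a weight that only approaches it. This is what the paper's one-line argument invokes: $2\notin\{k+\tfrac12\}$, and the next eigenvalue above $3/2$ is $5/2>2$. One way to make it rigorous on the flat half-disc:

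\begin{enumerate}
\item Expand $h:=\tilde F_{\mathrm{reg}}$ and $\mathcal R$ in the mixed eigenbasis $\sin((k+\tfrac12)\theta)$. Here $h$ is bounded by your $p>4$ step, since $\nabla\mathcal R$ is then continuous and $\nabla u_1$ is bounded.
\item Solve the radial equations $-(w_k''+w_k'/r-\lambda_k^2w_k/r^2)=h_k$ by variation of parameters with $r^{\pm\lambda_k}$. Take the $s^{1+\lambda_k}$ integral from $0$. Take the $s^{1-\lambda_k}$ integral from $0$ if $\lambda_k<2$ and from the outer radius if $\lambda_k>2$. The particular part then satisfies $|w_k(r)|\le 2\|h\|_{L^\infty}\,r^2/|\lambda_k^2-4|$.
\item The $r^{-\lambda_k}$ terms are excluded by finite energy.
\item The $r^{\lambda_k}$ terms vanish for $\lambda_k\in\{\tfrac12,\tfrac32\}$ by your coefficient argument. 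For $\lambda_k\ge\tfrac52$ they contribute $\mathcal O(r^{5/2})$ in sum.
\item Since $\sum_k|\lambda_k^2-4|^{-1}<\infty$, summing gives $|\mathcal R|\le Cr^2$ with no logarithm, precisely because no $\lambda_k$ equals $2$.
\end{enumerate}

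Equivalently, you could use a Mellin inversion on the line $\operatorname{Re}\lambda=2$ in a dyadic-supremum weighted norm. The spectral gap must be exploited at exponent $2$, not through $L^p$-weights approaching it.
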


\begin{proof}
The remainder solves the residual equation
\begin{equation}
\label{eq:residual}
-\Delta \mathcal{R} + \mu \mathcal{R} = f(x, u) - \mu(u_0 + u_1) + \big( g(\nabla u) - g(\nabla u_0) \big) =: F_{\mathcal{R}},
\end{equation}
the singular datum $g(\nabla u_0) = r^{-1/2}G$ and $-\Delta u_1$ having cancelled by construction (Theorem~\ref{thm:resonance}).
By the Lipschitz bound of Assumption~\ref{ass:gradient_penalty},
\begin{equation}
\label{eq:lip}
|g(\nabla u) - g(\nabla u_0)| \le C_g\,|\nabla(u - u_0)| = C_g\,|\nabla u_1 + \nabla \mathcal{R}|,
\end{equation}
and $\nabla u_1 = \mathcal{O}(r^{1/2}\ln r)$ is bounded and vanishes as $r\to0$; thus $F_{\mathcal{R}}$ is controlled by $f(x,u)$, by $u_0+u_1 \in L^\infty_{loc}$,
and by $\nabla\mathcal{R}$. We close the estimate by a bootstrap. Since $u, u_0, u_1 \in H^1_{loc}$ (indeed $\nabla u_1 \in L^2_{loc}$), we have
$\mathcal{R} \in H^1_{loc}$, so $\nabla\mathcal{R} \in L^2_{loc}$ and, by \eqref{eq:lip}, $F_{\mathcal{R}} \in L^2_{loc}$. The subtracted remainder
$\mathcal{R}$ contains no singular component below the exponent $5/2$ (those of exponents $1/2$ and $3/2$ having been removed), and no pencil eigenvalue
\eqref{eq:pencil} lies in $(3/2, 5/2)$; hence the weighted $L^2$ regularity theory for the mixed Dirichlet--Neumann Laplacian gives $\mathcal{R} \in H^2_{loc}$,
and Sobolev embedding yields $\nabla\mathcal{R} \in L^q_{loc}$ for every $q < \infty$. Reinserting this into \eqref{eq:lip}, the integrability of
$F_{\mathcal{R}}$ is now limited only by $f(x, u_0) \in L^p_{loc}$, $p > 2$; the weighted elliptic estimate
\begin{equation}
\| \mathcal{R} \|_{W^{2,p}(V)} \le C \left( \| F_{\mathcal{R}} \|_{L^p(U)} + \| \mathcal{R} \|_{L^p(U)} \right), \qquad V \Subset U,
\end{equation}
valid because $2-2/p \notin \{\lambda_k\}$ (that is, $p \neq 4$; the borderline $p=4$ is recovered through any slightly smaller exponent), then gives $\mathcal{R} \in W^{2,p}_{loc}(\Omega)$, and the embedding
$W^{2,p} \hookrightarrow C^{1,\gamma}$ ($\gamma = 1 - 2/p$) the stated Hölder regularity. If in addition $f(x, u_0) \in L^\infty_{loc}$,
then $F_{\mathcal{R}} \in L^\infty_{loc}$ and the particular response to an $\mathcal{O}(1)$ datum scales as $r^{2}$, no resonance intervening since the
next pencil exponent above $3/2$ is $5/2 > 2$; therefore $\mathcal{R} = \mathcal{O}(r^2)$.
\end{proof}

\begin{remark}[Corner angles: resonance is exceptional, not generic]
Theorem \ref{thm:resonance} is established for a locally flat junction ($\alpha = \pi$), and the logarithmic anomaly does
not persist for a generic corner angle. For an internal angle $\alpha$ the mixed pencil is $\lambda_k = \left(k + \tfrac{1}{2}\right)\pi/\alpha$,
the leading singularity scales as $u_0 \sim r^{\pi/(2\alpha)}$, and a degree-$1$ gradient penalty generates the forcing $\mathcal{O}(r^{\pi/(2\alpha) - 1})$,
whose separable response sits at the exponent $s = \pi/(2\alpha) + 1$. A logarithmic factor is produced only in the resonant case $s = \lambda_k$, that is
$\pi/(2\alpha) + 1 = \left(k + \tfrac{1}{2}\right)\pi/\alpha$, which simplifies to $1 = k\pi/\alpha$ and hence
\begin{equation*}
\alpha = k\pi, \qquad k \in \{1, 2, \dots\}.
\end{equation*}
The exponent coincidence therefore occurs precisely at the flat junction $\alpha = \pi$ ($k = 1$) treated here and at the slit $\alpha = 2\pi$ ($k = 2$);
for every other angle the forcing produces a pure, non-resonant power $r^{\pi/(2\alpha) + 1}$ with no logarithm. The anomaly is thus universal across
the gradient penalty (every admissible norm resonates at the flat junction, as Section~\ref{sec:application} shows) but exceptional across the
geometry, confined to the special angles $\alpha = k\pi$.
\end{remark}

\section{Applications to Specific Semilinear Functionals}
\label{sec:application}
To demonstrate the practical application of Theorem \ref{thm:resonance}, we explicitly solve the non-homogeneous boundary value problem for $\Psi(\theta)$
under two different gradient penalty norms.

\subsection{The Anisotropic $\ell_1$-Norm Penalty (The Mi\v{s}ur Scheme)}
We first apply the theorem to the specific numerical scheme proposed by Mi\v{s}ur (2021) \cite{misur2021}. The domain is the unit disc, with Dirichlet and
Neumann boundaries meeting at the points $(-1, 0)$ and $(1, 0)$ \cite{misur2021}. The semilinear equation solved is:
\begin{equation}
\label{eq:misur_specific}
-\Delta u + 10u = x_1^2 + x_2^2 + \log(1+|u|) + |\partial_{x_1} u| + |\partial_{x_2} u|
\end{equation}
Focusing on the junction at $(1, 0)$, the gradient penalty is the $\ell_1$-norm: $g(\nabla u) = |\partial_{x_1} u| + |\partial_{x_2} u|$ \cite{misur2021}.
Evaluating the gradient of the leading linear term $u_0 = c_0 r^{1/2} \sin(\theta/2)$ yields the angular forcing profile:
\begin{equation}
\label{eq:forcing_profile_l1}
G(\theta) = \frac{c_0}{2} \left( \sin\left(\frac{\theta}{2}\right) + \cos\left(\frac{\theta}{2}\right) \right).
\end{equation}

Projecting $G(\theta)$ onto the resonant eigenfunction gives the resonance constant:
\begin{equation}
\label{eq:kappa_eval_l1}
\kappa = \int_0^\pi \frac{c_0}{2} \left( \sin\left(\frac{\theta}{2}\right) + \cos\left(\frac{\theta}{2}\right) \right) \sin\left(\frac{3\theta}{2}\right) d\theta = \frac{c_0}{2}.
\end{equation}
Because $\kappa \neq 0$, the gradient penalty resonates with the differential operator. By Theorem \ref{thm:resonance}, the logarithmic coefficient is
$A = -\frac{2(c_0/2)}{3\pi} = -\frac{c_0}{3\pi}$. 

To find the angular offset, we solve $\Psi'' + \frac{9}{4}\Psi = -\frac{c_0}{2} (\sin(\theta/2) + \cos(\theta/2)) + \frac{c_0}{\pi} \sin(3\theta/2)$ with mixed
boundary conditions. Applying the method of undetermined coefficients yields:
\begin{equation}
\label{eq:exact_psi_l1}
\Psi(\theta) = C_1 \sin\left(\frac{3\theta}{2}\right) + \frac{c_0}{4} \cos\left(\frac{3\theta}{2}\right) - \frac{c_0}{4} \left(\sin\left(\frac{\theta}{2}\right) + \cos\left(\frac{\theta}{2}\right)\right) - \frac{c_0}{3\pi} \theta \cos\left(\frac{3\theta}{2}\right).
\end{equation}
The homogeneous cosine constant $c_0/4$ is isolated by the Dirichlet condition $\Psi(0) = 0$. The Neumann boundary condition $\Psi'(\pi) = 0$ is then automatically
consistent, by the solvability identity noted in the proof of Theorem~\ref{thm:resonance}.
The homogeneous sine constant $C_1$ is uniquely determined by imposing strict $L^2$-orthogonality against the null space
($\int_0^\pi \Psi(\theta)\sin(3\theta/2)d\theta = 0$).

\subsection{The Isotropic $\ell_2$-Norm Penalty}
To demonstrate that the logarithmic anomaly is a universal feature rather than an artifact of the anisotropic $\ell_1$-norm, we evaluate the standard Euclidean
penalty: $g(\nabla u) = |\nabla u|$. 

Converting the gradient of $u_0 = c_0 r^{1/2} \sin(\theta/2)$ to polar coordinates simplifies the Euclidean norm computation. Under the normalisation
$c_0 > 0$ (Remark~\ref{rem:sign}), so that $\sqrt{c_0^2/4} = c_0/2$:
\begin{equation}
g(\nabla u_0) = \sqrt{ (\partial_r u_0)^2 + \left(\frac{1}{r}\partial_\theta u_0\right)^2 } = \sqrt{ \frac{c_0^2}{4} r^{-1} \left( \sin^2\left(\frac{\theta}{2}\right) + \cos^2\left(\frac{\theta}{2}\right) \right) } = \frac{c_0}{2} r^{-1/2}.
\end{equation}
The isotropic penalty yields a constant angular profile: $G(\theta) = c_0/2$. Projecting this constant profile onto the resonant
eigenfunction yields:
\begin{equation}
\kappa = \int_0^\pi \frac{c_0}{2} \sin\left(\frac{3\theta}{2}\right) d\theta = \frac{c_0}{2} \left[ -\frac{2}{3}\cos\left(\frac{3\theta}{2}\right) \right]_0^\pi = \frac{c_0}{2} \left( 0 - \left(-\frac{2}{3}\right) \right) = \frac{c_0}{3}.
\end{equation}
Because $\kappa \neq 0$, the isotropic penalty guarantees resonance. The logarithmic coefficient becomes $A = -\frac{2(c_0/3)}{3\pi} = -\frac{2c_0}{9\pi}$.

The ODE for the offset profile reduces to $\Psi'' + \frac{9}{4}\Psi = -\frac{c_0}{2} + \frac{2c_0}{3\pi} \sin(3\theta/2)$. The particular solution for the constant
forcing term is $-2c_0/9$. Imposing the mixed boundary condition $\Psi(0) = 0$ isolates the general solution:
\begin{equation}
\label{eq:exact_psi_l2}
\Psi(\theta) = C_1 \sin\left(\frac{3\theta}{2}\right) + \frac{2c_0}{9}\cos\left(\frac{3\theta}{2}\right) - \frac{2c_0}{9} - \frac{2c_0}{9\pi} \theta \cos\left(\frac{3\theta}{2}\right).
\end{equation}
Once again, the Neumann condition $\Psi'(\pi) = 0$ is automatically consistent (proof of Theorem~\ref{thm:resonance}), and $C_1$ is locked by the $L^2$-orthogonality constraint. 

In both the anisotropic and isotropic configurations, resolving this explicit two-term expansion ($u_{approx} = u_0 + u_1$), whether through an enriched finite
element space or through local mesh adaptation concentrated at the junction, reduces the localized resonance error and restores degree-of-freedom efficiency,
as the experiments of Section \ref{sec:numerical_comparison} confirm.

\subsection{The Maximum $\ell_\infty$-Norm Penalty}
To complete the analysis across standard vector norms, we evaluate the maximum absolute Cartesian gradient penalty (the $\ell_\infty$-norm):
$g(\nabla u) = \max(|\partial_{x_1} u|, |\partial_{x_2} u|)$.

Using the Cartesian components of the leading-order gradient $\nabla u_0$, and noting that both $\sin(\theta/2)$ and $\cos(\theta/2)$ are strictly
non-negative on the interval $\theta \in [0, \pi]$, the angular profile becomes a piecewise maximum:
\begin{equation}
G(\theta) = \frac{c_0}{2} \max\left( \sin\left(\frac{\theta}{2}\right), \cos\left(\frac{\theta}{2}\right) \right) = 
\begin{cases} 
\frac{c_0}{2} \cos\left(\frac{\theta}{2}\right) & \text{for } \theta \in [0, \pi/2] \\
\frac{c_0}{2} \sin\left(\frac{\theta}{2}\right) & \text{for } \theta \in [\pi/2, \pi]
\end{cases}
\end{equation}

To compute the resonance constant $\kappa$, we split the $L^2$-projection over the two sub-domains:
\begin{equation}
\kappa = \frac{c_0}{2} \left[ \int_0^{\pi/2} \cos\left(\frac{\theta}{2}\right) \sin\left(\frac{3\theta}{2}\right) d\theta + \int_{\pi/2}^\pi \sin\left(\frac{\theta}{2}\right) \sin\left(\frac{3\theta}{2}\right) d\theta \right].
\end{equation}
Applying standard trigonometric product-to-sum identities, the first integral evaluates exactly to $1$, while the second evaluates to $-1/2$.
Consequently, the resonance constant is:
\begin{equation}
\kappa = \frac{c_0}{2} \left( 1 - \frac{1}{2} \right) = \frac{c_0}{4}.
\end{equation}
Because $\kappa \neq 0$, the $\ell_\infty$-norm penalty also resonates with the principal operator. The resulting logarithmic coefficient is
$A = -\frac{2(c_0/4)}{3\pi} = -\frac{c_0}{6\pi}$. 

While deriving the exact angular offset $\Psi(\theta)$ in this case requires piecing together two separate ODE solutions with $C^1$ matching conditions at
$\theta = \pi/2$, the existence and exact magnitude of the logarithmic anomaly $\left(-\frac{c_0}{6\pi} r^{3/2} \ln(r) \sin\left(\frac{3\theta}{2}\right)\right)$
is established. 

The appearance of this logarithmic resonance across the $\ell_1$, $\ell_2$, and $\ell_\infty$ norms confirms that the anomaly is a universal characteristic of
gradient-dependent semilinear perturbations near mixed boundaries, not an artifact of a specific norm's geometry.

\subsection{Generalization to Arbitrary $\ell_q$-Norm Penalties}
The derivations for the $\ell_1$, $\ell_2$, and $\ell_\infty$ norms strongly imply a universal resonance behavior. To rigorously formalize this, we evaluate an
arbitrary $\ell_q$-norm gradient penalty for any $q \in [1, \infty)$:
\begin{equation}
g(\nabla u) = \left( |\partial_{x_1} u|^q + |\partial_{x_2} u|^q \right)^{1/q}.
\end{equation}
Substituting the Cartesian components of the leading-order gradient $\nabla u_0$ yields the generalized angular source profile:
\begin{equation}
G_q(\theta) = \frac{c_0}{2} \left( \sin^q\left(\frac{\theta}{2}\right) + \cos^q\left(\frac{\theta}{2}\right) \right)^{1/q}.
\end{equation}
To determine if resonance occurs, we must ensure that the projection of $G_q(\theta)$ onto the resonant eigenfunction is strictly non-zero for all $q$.
We define the resonance constant as a function of the parameter $q$:
\begin{equation}
\kappa_q = \int_0^\pi \frac{c_0}{2} \left( \sin^q\left(\frac{\theta}{2}\right) + \cos^q\left(\frac{\theta}{2}\right) \right)^{1/q} \sin\left(\frac{3\theta}{2}\right) d\theta.
\end{equation}
The function $G_q(\theta)$ is strictly positive, continuous, and symmetric about $\theta = \pi/2$. Utilizing this symmetry, the integral can be shifted by
setting $\theta = \pi/2 + x$, which reduces the inner product to
$\kappa_q = \frac{\sqrt{2}}{2} \int_{-\pi/2}^{\pi/2} G_q(\pi/2 + x) \cos\left(\frac{3x}{2}\right) dx$.
By symmetry, this is equivalent to $\kappa_q = \sqrt{2} \int_0^{\pi/2} G_q(\pi/2 + x) \cos\left(\frac{3x}{2}\right) dx$.

Writing $w(x) := G_q(\pi/2 + x)$ and integrating by parts, the vanishing of $\sin(3x/2)$ at $x = 0$ together with the endpoint value
$w(\pi/2) = G_q(\pi) = c_0/2$ yields the exact identity
\begin{equation}
\label{eq:kappa_ibp}
\kappa_q = \frac{c_0}{3} - \frac{2\sqrt{2}}{3} \int_0^{\pi/2} w'(x) \sin\left(\frac{3x}{2}\right) dx.
\end{equation}
The isotropic case $q = 2$ satisfies $w \equiv c_0/2$, so the correction integral vanishes identically and \eqref{eq:kappa_ibp} recovers $\kappa_2 = c_0/3$
exactly. To control the correction term for general $q$, we set $a = \sin(\theta/2)$ and $b = \cos(\theta/2)$ and differentiate the identity
$w^q = (c_0/2)^q(a^q + b^q)$, which gives $w^{q-1} w' = (c_0/2)^q\,\tfrac{1}{2} ab\,(a^{q-2} - b^{q-2})$. Since $a \ge b \ge 0$ throughout
$\theta \in [\pi/2, \pi]$, with $b > 0$ on the interior and $b = 0$ only at $\theta = \pi$, where $w^{q-1}w' = (c_0/2)^q\tfrac12(a^{q-1}b - ab^{q-1})$ stays
bounded for $q \ge 1$, and $(c_0/2)^q > 0$, the derivative $w'$ carries the
constant sign of $(q - 2)$; hence $w$ is monotone on $[0, \pi/2]$ and
\begin{equation}
\int_0^{\pi/2} |w'(x)|\, dx = |w(\pi/2) - w(0)| = \frac{c_0}{2}\left| 1 - 2^{1/q - 1/2} \right|.
\end{equation}
Bounding $|\sin(3x/2)| \le 1$ in \eqref{eq:kappa_ibp} then furnishes the uniform lower estimate
\begin{equation}
\kappa_q \ge \frac{c_0}{3} - \frac{c_0 \sqrt{2}}{3} \left| 1 - 2^{1/q - 1/2} \right| \ge \frac{c_0}{3}\left( \sqrt{2} - 1 \right) > 0,
\end{equation}
where the last inequality takes its worst case at $q = 1$, at which $|1 - 2^{1/q - 1/2}|$ is maximal. Having discarded the factor $|\sin(3x/2)| \le 1$, this crude
bound is far from the true value $\kappa_1 = c_0/2$ and serves only to certify positivity; the sharp values are recorded in Proposition~\ref{prop:monotone}.
Consequently $\kappa_q \neq 0$ for every $q \in [1, \infty]$, so the resonance is universal.

The endpoints of the family evaluate exactly to $\kappa_1 = c_0/2$ and $\lim_{q \to \infty} \kappa_q = c_0/4$. In fact these are the extreme values of a
strictly ordered family.

\begin{proposition}[Monotonicity of the resonance constant]
\label{prop:monotone}
The resonance constant $\kappa_q$ is strictly decreasing in $q$ on $[1,\infty]$. Consequently $\kappa_q \in [c_0/4,\, c_0/2]$ for every $q \in [1,\infty]$,
the extreme values being attained only at $q=1$ and in the limit $q\to\infty$.
\end{proposition}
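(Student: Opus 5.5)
We work in the symmetric variable already used for \eqref{eq:kappa_ibp}. Set $\theta=\pi/2+x$ with $x\in[0,\pi/2]$, and write $a=\sin(\theta/2)$, $b=\cos(\theta/2)$, so that $a^2+b^2=1$ and $a\ge b\ge0$. Let $N_q(x)=(a^q+b^q)^{1/q}$. The reduction preceding the statement then gives
\begin{equation*}
\kappa_q=\frac{c_0}{\sqrt2}\int_0^{\pi/2}N_q(x)\cos\!\Big(\frac{3x}{2}\Big)\,dx .
\end{equation*}
Since $c_0>0$ (Remark~\ref{rem:sign}), the plan is to differentiate under the integral sign in $q\in(1,\infty)$ and show that $\frac{d}{dq}\kappa_q<0$. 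The endpoint statements then follow from continuity. At $q=1$ we use the value $\kappa_1=c_0/2$. As $q\to\infty$, $N_q\to\max(a,b)$ uniformly, so dominated convergence gives $\kappa_q\to c_0/4$, the $\ell_\infty$ value already computed. Strict decrease on $[1,\infty)$ together with this limit yields $\kappa_q\in(c_0/4,c_0/2]$, with equality only at $q=1$.

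\textbf{Key computation: an entropy formula.} Put $p=p_q(x)=a^q/(a^q+b^q)\in[1/2,1]$. A direct logarithmic differentiation gives
\begin{equation*}
\partial_q \ln N_q=-\frac{1}{q^2}\Big(-p\ln p-(1-p)\ln(1-p)\Big)=:-\frac{H(p)}{q^2}.
\end{equation*}
Hence $\phi_q(x):=-\partial_qN_q(x)=q^{-2}N_q(x)\,H(p_q(x))\ge0$. Moreover $\phi_q$ is strictly positive for $x<\pi/2$ and vanishes at $x=\pi/2$, where $b=0$ and $p=1$. Thus
\begin{equation*}
\frac{d\kappa_q}{dq}=-\frac{c_0}{\sqrt2}\int_0^{\pi/2}\phi_q(x)\cos\!\Big(\frac{3x}{2}\Big)\,dx .
\end{equation*}
The pointwise monotonicity of $\ell_q$ norms is not enough by itself, because $\cos(3x/2)$ changes sign at $x=\pi/3$. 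We instead use that the primitive $\int_0^y\cos(3x/2)\,dx=\tfrac23\sin(3y/2)$ is nonnegative on $[0,\pi/2]$ and positive on $(0,\pi/2]$. By the second mean value theorem (Abel summation), $\int_0^{\pi/2}\phi_q\cos(3x/2)\,dx>0$ as soon as $\phi_q$ is nonincreasing on $[0,\pi/2]$ and not identically zero. So the whole proof reduces to showing that $x\mapsto\phi_q(x)$ is nonincreasing.

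\textbf{Main obstacle: monotonicity of $\phi_q$.} As $x$ runs over $[0,\pi/2]$, the ratio $t=b/a$ decreases from $1$ to $0$. Consequently $p$ increases from $1/2$ to $1$, and $H(p)$ decreases from $\ln2$ to $0$.

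For $1<q\le2$ the argument around \eqref{eq:kappa_ibp} shows that $N_q$ is nonincreasing in $x$. Then $\phi_q$ is a product of two nonnegative nonincreasing factors, and we are done.

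For $q>2$ the factor $N_q$ increases, but only mildly: from $2^{1/q-1/2}$ to $1$. Here I would prove $\frac{d}{dx}\big[\ln N_q+\ln H(p)\big]\le0$ directly. Using $\frac{d}{dx}\ln N_q=\frac{1}{2}\,p\,\frac{b}{a}\big(1-t^{-2}\cdot\frac{1-p}{p}\big)$-type expressions derived from $w^{q-1}w'$ above, together with $p'=\tfrac{q}{2}\,p(1-p)\,(a/b+b/a)$ and $H'(p)=\ln\frac{1-p}{p}=q\ln t$, the claim reduces to a one-variable inequality in $t\in(0,1)$. The useful structural fact is that $H'(p)\,p'$ carries the factor $q^2\,p(1-p)\ln t$, which is large and negative. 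It should dominate the bounded positive growth of $\ln N_q$, whose derivative is $O(p(1-p)\cdot|1-t^{q-2}|)$.

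If this direct inequality proves delicate, the fallback is to substitute $s=t^q$, express both terms through $s$, and verify the resulting elementary inequality of the form $-\,s\ln s\,(\cdots)\ge(1-s)(\cdots)$ on $(0,1)$.
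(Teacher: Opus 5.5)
Your route is the paper's route. You use the same symmetric representation and the same entropy identity, with your $\phi_q$ being the paper's $\mu$. You make the same reduction, integrating by parts against the nonnegative primitive $\tfrac23\sin(3x/2)$, to monotonicity of $\phi_q$ in $x$, and you give the same product argument for $q\le 2$. The gap is the case $q>2$. It carries essentially all of the difficulty, and you only assert it (``it should dominate''). The heuristic you offer for it is also misframed. With your own formulas, $\frac{d}{dx}\big[\ln N_q+\ln H(p)\big]\le 0$ is equivalent to
\[
H(p)\,t\,(1-t^{q-2}) \;\le\; q^2\,(1-p)\,\ln(1/t)\,\big(t+t^{-1}\big), \qquad 0<t<1 .
\]
So the quantity that $|H'(p)p'|$ must dominate is $H\cdot\frac{d}{dx}\ln N_q$, not $\frac{d}{dx}\ln N_q$. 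The latter equals $\tfrac12 pt(1-t^{q-2})\sim t/2$ as $t\to0$, while $p(1-p)\sim t^q$. Hence your claimed bound $O(p(1-p)\,|1-t^{q-2}|)$ is false near $x=\pi/2$. Only the factor $H$ saves the comparison there, and $H/(1-p)$ is unbounded: it grows like $q\ln(1/t)$. Near $x=0$, on the other hand, both sides of the display vanish to first order in $1-t$. So the mechanism is not ``large versus bounded'' but a comparison of coefficients, $(\ln 2)(q-2)$ against $q^2$.

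The missing ingredient is a quantitative entropy bound, and this is exactly what the paper supplies. It uses $(a^q+b^q)H=b^q\big[qL+(1+r^q)\ln(1+r^{-q})\big]\le b^q(qL+2)$ with $r=a/b=1/t$ and $L=\ln r$. Equivalently, $H(p)\le(1-p)\big(2+q\ln(1/t)\big)$, which also follows directly from $-p\ln p\le 1-p$. The paper then reduces to the scalar inequality $(1-e^{-\ell})(q\ell+2(q-2))\le 2q^2\ell$ and closes it with $1-e^{-\ell}\le\min(\ell,1)$.

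In your variables the same bound finishes at once. Apply $1-t^{q-2}\le(q-2)\ln(1/t)$ to the term containing the constant $2$, and $1-t^{q-2}\le 1$ to the term containing $q\ln(1/t)$. The left side of the display is then at most $(3q-4)(1-p)\,t\ln(1/t)$. The right side is at least $2q^2(1-p)\ln(1/t)$, and $3q-4<2q^2$.

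Until some such estimate is written down, monotonicity of $\phi_q$ for $q>2$, and hence the proposition, is not proved. The rest of your argument is fine: differentiation under the integral, strictness from continuity of $\phi_q$, and the endpoints $q=1$ and $q\to\infty$.
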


\begin{proof}
Fix $c_0>0$ and recall the symmetry-reduced representation obtained above,
\begin{equation*}
\kappa_q = \frac{c_0}{\sqrt2}\int_0^{\pi/2} M_q(x)\,\cos\!\Big(\tfrac{3x}{2}\Big)\,dx,
\qquad M_q(x) = \big(a^q+b^q\big)^{1/q},
\end{equation*}
where $a=\sin\phi$, $b=\cos\phi$, and $\phi = \tfrac\pi4+\tfrac x2 \in[\tfrac\pi4,\tfrac\pi2]$; thus $a\ge b\ge0$, $a^2+b^2=1$, and $\phi$ increases with $x$.

\emph{Step 1 (an entropy identity).} Since $q\mapsto(a^q+b^q)^{1/q}$ is nonincreasing, $\mu := -\partial_q M_q \ge 0$. Let $w=(a^q,b^q)/(a^q+b^q)$ denote the
associated probability vector and $H = -\sum_i w_i\ln w_i \ge 0$ its Shannon entropy. The computation
\begin{equation*}
\partial_q \ln M_q = \partial_q\Big[\tfrac1q\ln(a^q+b^q)\Big]
= -\tfrac1{q^2}\ln(a^q+b^q) + \tfrac1q\,\frac{a^q\ln a + b^q\ln b}{a^q+b^q}
= \tfrac1{q^2}\sum_i w_i \ln w_i = -\frac{H}{q^2}
\end{equation*}
yields
\begin{equation}\label{eq:mu-entropy}
\mu = -M_q\,\partial_q\ln M_q = \frac{1}{q^2}\,M_q\,H .
\end{equation}
In particular $\mu\ge0$; and since $a=1,b=0$ at $\phi=\tfrac\pi2$ gives $M_q\equiv1$ and $H\equiv0$ there, we have $\mu=0$ at $x=\tfrac\pi2$.

\emph{Step 2 ($\mu$ is nonincreasing in $x$).} We first work on the interior $x\in[0,\tfrac\pi2)$, i.e.\ $\phi\in[\tfrac\pi4,\tfrac\pi2)$, where $b>0$ and
hence $H>0$ and $\mu>0$, so that $\ln\mu$ and $\ln H$ are well defined (the excluded endpoint $x=\tfrac\pi2$, where $\mu=H=0$, is treated afterwards by
continuity). As $\phi$ increases with $x$, it suffices to show $\frac{d}{d\phi}\ln\mu \le 0$ on this interior. Using $a'=b$, $b'=-a$
(derivatives in $\phi$), \eqref{eq:mu-entropy} gives
\begin{equation*}
\frac{d}{d\phi}\ln M_q = \frac{ab\,(a^{q-2}-b^{q-2})}{a^q+b^q},
\qquad
\frac{d}{d\phi}\ln H = -\,\frac{q^2\,a^{q-1}b^{q-1}\ln(a/b)}{(a^q+b^q)^2\,H}.
\end{equation*}
For $1\le q\le 2$ one has $a^{q-2}\le b^{q-2}$ (as $a\ge b$), so both summands are $\le0$ and the claim is immediate. For $q>2$ the first summand is $\ge0$,
and it remains to show it is dominated by the second; clearing denominators, this is exactly
\begin{equation}\label{eq:ddag}
(a^{q-2}-b^{q-2})(a^q+b^q)\,H \;\le\; q^2\,a^{q-2}b^{q-2}\ln(a/b).
\end{equation}
Write $r=a/b\ge1$ and $L=\ln r\ge0$. A direct computation gives $(a^q+b^q)H = b^q\big[(1+r^q)\ln(1+r^q) - q\,r^q L\big]$, and, using $\ln(1+r^q)=qL+\ln(1+r^{-q})$,
\begin{equation*}
(1+r^q)\ln(1+r^q) - q r^q L = qL + (1+r^q)\ln(1+r^{-q}) \le qL + 2,
\end{equation*}
where the bound uses $\ln(1+u)\le u$ and $1+r^{-q}\le2$. Substituting into \eqref{eq:ddag} and dividing by $b^{2q-4}$ (with $b^2=(1+r^2)^{-1}$), \eqref{eq:ddag}
follows from $\frac{(r^{q-2}-1)(qL+2)}{1+r^2} \le q^2 r^{q-2} L$; and since $1+r^2\ge2$ it suffices that $(r^{q-2}-1)(qL+2)\le 2q^2 r^{q-2}L$.
Dividing by $r^{q-2}$ and setting $\ell=(q-2)L\ge0$ (so that $r^{-(q-2)}=e^{-\ell}$ and $L=\ell/(q-2)$), this becomes the scalar inequality
\begin{equation}\label{eq:scalar}
(1-e^{-\ell})\big(q\ell + 2(q-2)\big) \le 2q^2\ell, \qquad \ell\ge0.
\end{equation}
Finally \eqref{eq:scalar} holds by the elementary bound $1-e^{-\ell}\le\min(\ell,1)$: for $0\le\ell\le 2q-2$ use $1-e^{-\ell}\le\ell$, so that \eqref{eq:scalar}
reduces to $q\ell+2(q-2)\le 2q^2$, i.e. $\ell\le 2q-2+\tfrac4q$, which holds; for $\ell\ge 2q-2$ use $1-e^{-\ell}\le1$, so that \eqref{eq:scalar} reduces to
$2(q-2)\le(2q^2-q)\ell$, which holds since $(2q^2-q)\ell\ge(2q^2-q)(2q-2)\ge 2(q-2)$. Thus $\mu' \le 0$ throughout the interior $(0,\tfrac\pi2)$, with strict
inequality there. Finally, $\mu = \tfrac{1}{q^2}M_q H$ is continuous on the closed interval $[0,\tfrac\pi2]$, with $\mu(\tfrac\pi2)=0$ from Step 1 (as $b\to0$,
$\mu \sim q^{-1}b^q|\ln b|\to0$), so the interior monotonicity extends by continuity to make $\mu$ nonincreasing on $[0,\tfrac\pi2]$ and strictly decreasing
on $(0,\tfrac\pi2)$. The endpoint slope differs between the two regimes: for $q>1$ it vanishes, $\mu' \sim -b^{q-1}|\ln b|\to0$, whereas in the boundary case
$q=1$ it diverges, $\mu' \sim -|\ln b|\to-\infty$ (a vertical tangent). In either case $\mu'$ stays integrable up to $x=\tfrac\pi2$ (a logarithmic singularity,
since $b\sim\tfrac12(\tfrac\pi2-x)$), which is all that the integration by parts of Step 3 requires.

\emph{Step 3 (conclusion).} Let $S(x)=\tfrac23\sin(\tfrac{3x}2)$, so $S'=\cos(\tfrac{3x}2)$, $S\ge0$ on $[0,\tfrac\pi2]$, and $S(0)=0$. Integrating by parts
and using $\mu(\tfrac\pi2)=0$,
\begin{equation*}
\int_0^{\pi/2}\mu(x)\cos\!\Big(\tfrac{3x}2\Big)dx
= \big[\mu S\big]_0^{\pi/2} - \int_0^{\pi/2}\mu'(x)S(x)\,dx
= -\int_0^{\pi/2}\mu'(x)S(x)\,dx \;>\; 0,
\end{equation*}
since $\mu'\le0$ (Step 2, strict on $(0,\tfrac\pi2)$) and $S\ge0$. Hence $\partial_q\kappa_q = -\frac{c_0}{\sqrt2}\int_0^{\pi/2}\mu\cos(\tfrac{3x}2)\,dx < 0$,
so $\kappa_q$ is strictly decreasing. Together with $\kappa_1=c_0/2$ and $\lim_{q\to\infty}\kappa_q=c_0/4$ this gives $\kappa_q\in[c_0/4,c_0/2]$.
\end{proof}

Therefore, the logarithmic anomaly is a structural feature of this entire class of semilinear problems. For any arbitrary $\ell_q$-norm penalty,
the exact logarithmic expansion takes the form:
\begin{equation}
u(r, \theta) = c_0 r^{1/2} \sin\left(\frac{\theta}{2}\right) - \frac{2\kappa_q}{3\pi} r^{3/2} \ln(r) \sin\left(\frac{3\theta}{2}\right) + r^{3/2}\left(\Psi_q(\theta) + c_1\sin\left(\frac{3\theta}{2}\right)\right) + \mathcal{R}(r, \theta).
\end{equation}
This confirms that standard polynomial mesh refinement schemes will inevitably suffer from singular pollution unless enriched with this logarithmic profile,
regardless of the specific vector norm chosen to model the gradient dependency.

\subsection{Support-Function Penalties and the Role of Asymmetry}
The admissible class of Assumption~\ref{ass:gradient_penalty} is considerably larger than the $\ell_q$ family: every convex, positively $1$-homogeneous function
on $\mathbf{R}^2$ is the \emph{support function} of a unique compact convex set $A\subset\mathbf{R}^2$,
\begin{equation}
\label{eq:support}
g(\nabla u)=h_A(\nabla u):=\max_{\mathbf a\in A}\,\mathbf a\cdot\nabla u,
\end{equation}
and every such $h_A$ is admissible, being $1$-homogeneous and Lipschitz with constant $\max_{\mathbf a\in A}|\mathbf a|$. Norms are exactly the centrally
symmetric case $A=-A$ (with $A$ the dual unit ball), while the advective penalty $|\nabla u|+\boldsymbol\beta\cdot\nabla u$ of Section~\ref{sec:framework}
is $h_A$ for the translated disc $A=\overline{B}(\boldsymbol\beta,1)$. Such penalties are common in applications: they are the Hamiltonians of
control-affine Hamilton--Jacobi--Bellman equations \cite{fleming2006}, $H(\nabla u)=\max_{a\in\mathcal A}\{-\mathbf b(a)\cdot\nabla u\}$, and they furnish
the crystalline (Wulff-shaped) surface energies governing anisotropic interface motion \cite{taylor1992}.

Because the support function is additive under Minkowski translation, $h_{A_0+\{\boldsymbol\beta\}}=h_{A_0}+\boldsymbol\beta\cdot(\,\cdot\,)$, and $\kappa$
is linear in $G$, the resonance constant splits cleanly. Writing $A=A_0+\{\boldsymbol\beta\}$ with $A_0$ centred at the origin,
\begin{equation}
\label{eq:kappa_support}
\kappa=\underbrace{\int_0^\pi h_{A_0}(\mathbf v(\theta))\,\sin\!\Big(\tfrac{3\theta}{2}\Big)\,d\theta}_{=:\,\kappa_{A_0}} \;+\; \frac{c_0}{2}\,\beta_2,
\end{equation}
where we used $\int_0^\pi \mathbf v(\theta)\,\sin(\tfrac{3\theta}{2})\,d\theta = (0,\tfrac{c_0}{2})$. Thus translating the control set $A$ along the transverse
($x_2$) direction shifts $\kappa$ by exactly $\tfrac{c_0}{2}\beta_2$, while a translation along the junction ($x_1$) has no effect whatsoever. Two consequences
follow. First, cancellation of the anomaly is a codimension-one event: it occurs precisely on the critical hyperplane $\beta_2=-2\kappa_{A_0}/c_0$, so resonance
is generic and only a finely tuned transverse bias evades it. Second, within the $\ell_q$ scale Proposition~\ref{prop:monotone} gives
$\kappa_{A_0}\in[c_0/4,c_0/2]$, whence the cancelling drift lies in the narrow band $\beta_2\in[-1,-\tfrac12]$ (and equals $-\tfrac23$ for the Euclidean disc);
in particular every centrally symmetric $\ell_q$ penalty is resonant, and the anomaly can be removed only by breaking symmetry through transverse advection.
The logarithmic obstruction is therefore tied to the asymmetry of the gradient penalty, not merely to its magnitude.

\begin{remark}[Beyond convexity: non-convex admissible penalties]
Convexity plays no role in the resonance. Assumption~\ref{ass:gradient_penalty} requires only positive $1$-homogeneity and global Lipschitz continuity,
both preserved under differences, so the admissible class contains the \emph{differences of support functions} $g = h_{A} - h_{B}$, the $1$-homogeneous
``difference-of-convex'' functions, which are Lipschitz with constant $\max_{\mathbf a \in A}|\mathbf a| + \max_{\mathbf b \in B}|\mathbf b|$ but generically
non-convex. A concrete instance is the direction-discounted cost
\begin{equation}
\label{eq:dc_penalty}
g(\nabla u) = |\nabla u| - \gamma\,|\nabla u \cdot \mathbf e|, \qquad |\mathbf e| = 1,\ \ 0 < \gamma < 1,
\end{equation}
which lowers the penalty along $\mathbf e$ and is non-convex across the hyperplane $\nabla u \cdot \mathbf e = 0$; such energies model faceting and
the non-convex (pre-Wulff) crystalline surface tensions whose convexification yields the Wulff shape. Since $\kappa$ is linear in $g$, the resonance constant
simply splits as $\kappa = \kappa_A - \kappa_B$, non-zero for generic $A, B$; for \eqref{eq:dc_penalty} one finds $\kappa = c_0/3$ when $\mathbf e$ is aligned
with the junction and $\kappa = c_0/3 - \tfrac{\gamma c_0}{2}$ when it is transverse, again vanishing only on a codimension-one set ($\gamma = 2/3$).
The logarithmic anomaly is thus a feature of the entire homogeneous Lipschitz class, not an artifact of convex or norm-like penalties. Indeed, since $g$ is
fixed by its Lipschitz restriction to the unit circle, the admissible penalties correspond exactly to the Lipschitz angular profiles
$G(\theta) = g(\mathbf v(\theta))$, and by \eqref{eq:orthogonality} resonance amounts to $G$ carrying a non-zero $\sin(\tfrac{3\theta}{2})$
Fourier mode, a single scalar condition failing only on a closed, nowhere-dense set, so resonance is generic across the whole class.
\end{remark}

\section{Formalization of the Enriched Galerkin Formulation}
\label{sec:xfem_formulation}
To bridge the gap between the continuous asymptotic theory derived in Section \ref{sec:theorem} and the discrete numerical implementation evaluated in
Section \ref{sec:numerical_comparison}, we mathematically formalize the Extended Finite Element Method (XFEM) enrichment space. 

The standard continuous Galerkin weak formulation of the semilinear boundary value problem \eqref{eq:semilinear_pde} seeks a solution $u \in H^1_D(\Omega)$
such that:
\begin{equation}
\label{eq:weak_form}
\int_\Omega \nabla u \cdot \nabla v \, dx + \mu \int_\Omega u v \, dx = \int_\Omega \Big( f(x, u) + g(\nabla u) \Big) v \, dx \quad \forall v \in H^1_D(\Omega).
\end{equation}

Let $\mathcal{T}_h$ be a standard conforming triangulation of the domain $\Omega$, and let $\mathcal{N}$ denote the set of all standard finite element nodes.
We define the classical finite element subspace $V_h \subset H^1_D(\Omega)$ spanned by the standard piecewise polynomial nodal basis functions $N_i(x)$:
\begin{equation}
V_h = \text{span} \{ N_i(x) \}_{i \in \mathcal{N}}.
\end{equation}

As established in Theorem \ref{thm:resonance}, relying exclusively on $V_h$ restricts the convergence rate of the solver due to the presence of
both the linear singularity and the logarithmic anomaly induced by the semilinear resonance. To construct the enriched space $V_h^{XFEM}$, we define two global
enrichment functions capturing these specific local behaviors:
\begin{align}
\Phi_{lin}(r, \theta) &= r^{1/2} \sin\left(\frac{\theta}{2}\right), \\
\Phi_{log}(r, \theta) &= r^{3/2} \ln(r) \sin\left(\frac{3\theta}{2}\right).
\end{align}

To preserve the sparsity of the global stiffness matrix, we localize the enrichment to a subset of nodes $\mathcal{N}_{enr} \subset \mathcal{N}$
whose support intersects a predefined geometric radius $R$ around the mixed boundary junction $P$. Utilizing the standard basis functions $N_j(x)$ as a
Partition of Unity (PU) \cite{melenk1996, moes1999}, the discrete solution $u_h \in V_h^{XFEM}$ is expanded as:
\begin{equation}
\label{eq:xfem_expansion}
u_h(x) = \sum_{i \in \mathcal{N}} u_i N_i(x) + \sum_{j \in \mathcal{N}_{enr}} a_j N_j(x) \Phi_{lin}(x) + \sum_{j \in \mathcal{N}_{enr}} b_j N_j(x) \Phi_{log}(x)
\end{equation}
where $u_i$ are the classical nodal degrees of freedom, and $a_j, b_j$ are the additional degrees of freedom associated with the linear and logarithmic enrichment
functions, respectively.

Because both $\Phi_{lin}$ and $\Phi_{log}$ vanish along the Dirichlet boundary $\Gamma_D$ ($\theta = 0$), every enriched product $N_j\Phi$ vanishes there
too; since the only essential condition is the Dirichlet one $u = 0$ on $\Gamma_D$, conformity requires exactly this, and the enriched space is therefore
conforming, $V_h^{XFEM} \subset H^1_D(\Omega)$. The Neumann condition $\nabla u\cdot\nu = 0$ on $\Gamma_N$ is \emph{natural}: it is imposed weakly through the
variational form \eqref{eq:weak_form} and is neither required of, nor pointwise satisfied by, the trial functions. Indeed, although $\partial_\nu\Phi = 0$ on
$\Gamma_N$, the product rule gives $\partial_\nu(N_j\Phi) = N_j\,\partial_\nu\Phi + \Phi\,\partial_\nu N_j = \Phi\,\partial_\nu N_j$, which does not vanish in
general, consistent with the Neumann condition being enforced only in the weak sense.

By embedding the linear and logarithmic profiles of Theorem~\ref{thm:resonance} directly into the trial space via \eqref{eq:xfem_expansion}, the enriched scheme
captures the two profiles that obstruct standard polynomial approximation: $\Phi_{lin}$ reproduces $u_0$ exactly and thereby removes the sole component of $u$
that fails to lie in $H^2_{loc}$ (its second derivatives scaling as $r^{-3/2}$), while $\Phi_{log}$ reproduces the leading logarithmic sub-singularity
$r^{3/2}\ln r\,\sin(3\theta/2)$. The offset profile $r^{3/2}\Psi(\theta)$ and the homogeneous mode $c_1 r^{3/2}\sin(3\theta/2)$ of Theorem~\ref{thm:resonance} are
not in the enrichment span and remain for $V_h$; but together with the remainder $\mathcal{R}$ of Lemma~\ref{lem:regularity} they belong to
$H^2_{loc}(\Omega)$ (their second derivatives scaling at worst as $r^{-1/2}\ln r \in L^2$). Thus what the standard polynomial basis must approximate is free of the
strong $r^{1/2}$ pollution, and classical interpolation theory applies to it at the optimal first-order rate.

This heuristic is made precise by the following quasi-optimality estimate, which quantifies the convergence rate the enriched space recovers and thereby
anticipates the numerical results of Section~\ref{sec:numerical_comparison}.

\begin{proposition}[Quasi-optimality and recovered convergence rate]
\label{prop:quasiopt}
Assume the coercivity condition of Proposition~\ref{prop:wellposed}, and let $\{\mathcal{T}_h\}$ be a shape-regular family of triangulations with the enrichment
zone $\{x : \mathrm{dist}(x, P) < R\}$ held fixed as $h \to 0$. Let $u$ solve \eqref{eq:semilinear_pde} and let $u_h^{XFEM} \in V_h^{XFEM}$ be the enriched
Galerkin solution of \eqref{eq:weak_form}. Then $u_h^{XFEM}$ is quasi-optimal,
\begin{equation}
\label{eq:cea}
\|u - u_h^{XFEM}\|_{H^1(\Omega)} \le C \inf_{v_h \in V_h^{XFEM}} \|u - v_h\|_{H^1(\Omega)},
\end{equation}
with $C$ independent of $h$. Moreover, because $\Phi_{lin}$ reproduces $u_0$ within the enrichment zone, removing the only component of $u$ outside
$H^2_{loc}$, and $\Phi_{log}$ reproduces the leading logarithmic singularity $r^{3/2}\ln r\,\sin(3\theta/2)$, the best-approximation error is governed by the
residual $\rho := r^{3/2}\Psi(\theta) + c_1 r^{3/2}\sin(3\theta/2) + \mathcal{R}$, which is not reproduced by the enrichment but lies in $H^2_{loc}(\Omega)$
(with $\mathcal{R} \in W^{2,p}_{loc}(\Omega)$, $p > 2$, from Lemma~\ref{lem:regularity}); for $P_1$ elements this yields, modulo the standard
partition-of-unity blending estimate, which we do not establish here (see Section~\ref{sec:future_work}), the optimal first-order energy estimate
\begin{equation}
\label{eq:xfem_rate}
\|u - u_h^{XFEM}\|_{H^1(\Omega)} \le C\, h \left( \|\rho\|_{H^2(B_R(P))} + \|u\|_{H^2(\Omega \setminus B_R(P))} \right) = \mathcal{O}(h),
\end{equation}
equivalently a degree-of-freedom rate $s = \tfrac{1}{2}$, in contrast to the $s \approx 0.3$ stall of the unenriched scheme on quasi-uniform meshes
(Configuration~A of Section~\ref{sec:numerical_comparison}).
\end{proposition}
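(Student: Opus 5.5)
The plan is to recast the weak form \eqref{eq:weak_form} as an operator equation $\mathcal{A}u = 0$ in $H^1_D(\Omega)^\ast$. For $u,v \in H^1_D(\Omega)$ set
\begin{equation*}
\langle \mathcal{A}u, v\rangle := \int_\Omega \nabla u\cdot\nabla v + \mu u v - \big(f(x,u)+g(\nabla u)\big)v\,dx .
\end{equation*}
The first step is \emph{strong monotonicity}. For $w = u_1 - u_2$, the Lipschitz bounds on $f$ and $g$ with constant $C_g$ give
\begin{equation*}
\langle \mathcal{A}u_1-\mathcal{A}u_2, w\rangle \ge \|\nabla w\|^2 + \mu_0\|w\|^2 - C_g\|w\|^2 - C_g\|\nabla w\|\,\|w\| .
\end{equation*}
Young's inequality $C_g\|\nabla w\|\|w\| \le \tfrac12\|\nabla w\|^2 + \tfrac{C_g^2}{2}\|w\|^2$ then yields the lower bound
\begin{equation*}
\tfrac12\|\nabla w\|^2 + \big(\mu_0 - C_g - \tfrac{C_g^2}{2}\big)\|w\|^2 \ge \alpha\|w\|_{H^1}^2 ,
\end{equation*}
where $\alpha>0$ precisely under the coercivity hypothesis $\mu_0 > \mu^\ast$. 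The same bounds give Lipschitz continuity, $\|\mathcal{A}u_1-\mathcal{A}u_2\|_{\ast} \le L\|w\|_{H^1}$ with $L = 1+\|\mu\|_\infty + 2C_g$. This also supplies the monotonicity input cited in Proposition~\ref{prop:wellposed}.

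The second step is a nonlinear Céa argument. Since $V_h^{XFEM}\subset H^1_D(\Omega)$ is conforming (Section~\ref{sec:xfem_formulation}), Galerkin orthogonality holds: $\langle \mathcal{A}u - \mathcal{A}u_h, v_h\rangle = 0$ for all $v_h\in V_h^{XFEM}$. Existence and uniqueness of $u_h^{XFEM}$ follow from Browder--Minty on the finite-dimensional subspace. For any $v_h$,
\begin{equation*}
\alpha\|u-u_h\|^2 \le \langle \mathcal{A}u-\mathcal{A}u_h, u-u_h\rangle = \langle \mathcal{A}u-\mathcal{A}u_h, u-v_h\rangle \le L\|u-u_h\|\,\|u-v_h\| .
\end{equation*}
This gives \eqref{eq:cea} with $C = L/\alpha$, which is independent of $h$.

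The third step bounds the best-approximation error by an explicit candidate. Decompose $u = c_0\Phi_{lin} + A\,\Phi_{log} + \rho$ on $B_R(P)$, using Theorem~\ref{thm:resonance} with $A=-2\kappa/(3\pi)$. Take
\begin{equation*}
v_h = I_h\rho' + \sum_{j\in\mathcal N_{enr}}\big(c_0 N_j\Phi_{lin} + A N_j\Phi_{log}\big),
\end{equation*}
where $I_h$ is the nodal interpolant and $\rho' := u - \big(\sum_{j} N_j\big)(c_0\Phi_{lin}+A\Phi_{log})$. The PU sum equals $1$ on the fully enriched region, so there $\rho'=\rho$. On the uncovered region $\rho'=u\in H^2$, since $u$ is regular away from $P$ by Proposition~\ref{prop:wellposed}. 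In the blending layer of elements only partially enriched, $\rho'$ is no longer $H^2$-controlled by $\rho$ alone. The interpolation estimate $\|\rho'-I_h\rho'\|_{H^1}\le Ch|\rho'|_{H^2}$ gives \eqref{eq:xfem_rate} on the first two regions; checking that $\rho\in H^2_{loc}$ is a short scaling computation, since the $r^{3/2}$ terms have second derivatives of order $r^{-1/2}\ln r\in L^2$ and $\mathcal R\in W^{2,p}_{loc}$.

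The main obstacle is the blending layer. There $(1-\sum_j N_j)\,c_0\Phi_{lin}$ is a non-polynomial $r^{1/2}$ piece. Because the zone radius $R$ is fixed, this layer lies a distance of order $R$ from $P$, where $\Phi_{lin}$ is smooth. I would first try to show that the $H^1$ error of the blending terms is $\mathcal{O}(h)$ by treating $(1-\sum_j N_j)\Phi$ elementwise: a $P_1$ partition-of-unity factor times a function smooth on that element, with $H^2$ seminorm bounded by $h^{-1}$ times layer-localised quantities, over a layer of measure $\mathcal{O}(h)$. As the statement itself concedes, this blending estimate is the step I would flag as only sketched and defer to Section~\ref{sec:future_work}. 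The DOF rate $s=\tfrac12$ then follows from $N\sim h^{-2}$ on quasi-uniform meshes.
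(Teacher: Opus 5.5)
Your first two steps (strong monotonicity via Young's inequality with $\alpha=\min\{\tfrac12,\mu_0-C_g-\tfrac{C_g^2}{2}\}$, Lipschitz constant $L=1+\|\mu\|_{L^\infty}+2C_g$, Browder--Minty, and the nonlinear C\'ea argument with $C=L/\alpha$) coincide with the paper's proof. So does your competitor in the third step: exact coefficients $c_0$, $A$ on every enriched node plus a nodal interpolant of the residual. The paper, like you, stops at the blending layer.

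\textbf{The gap.} The one place you go beyond the paper would fail. With the coefficients frozen, the unwanted term on a blending element $T$ is $(1-S)\Psi_s$, where $S:=\sum_{j\in\mathcal N_{enr}}N_j$ and $\Psi_s:=c_0\Phi_{lin}+A\Phi_{log}$.
\begin{itemize}
\item Its Hessian contains $\nabla S\otimes\nabla\Psi_s+\nabla\Psi_s\otimes\nabla S$, which has size $h^{-1}|\nabla\Psi_s|\sim h^{-1}|c_0|R^{-1/2}$.
\item Hence no linear function on $T$ matches its gradient to better than order $|c_0|R^{-1/2}$ in $L^2(T)$-average; the local error is $\mathcal{O}(1)$, not $\mathcal O(h)$. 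In one dimension, $x^2/h-x$ on $[0,h]$ has derivative sweeping all of $[-1,1]$.
\item Summed over the $\sim R/h$ elements of the layer, this gives an $H^1$ error of order $|c_0|h^{1/2}$.
\end{itemize}
Your own scaling produces exactly this: $h$ times an $H^2$ seminorm of order $h^{-1}$, over a layer of measure $\mathcal O(h)$, gives $h\cdot h^{-1}\cdot h^{1/2}=h^{1/2}$, not $h$. This is a lower bound that does not depend on how the polynomial part is chosen. So no sharper elementwise estimate rescues that competitor; the deferred step cannot be closed for it at all. This is the classical blending deficiency, and it affects the paper's identical choice as well.

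\textbf{The fix.} Change the competitor rather than the estimate, by tapering the enrichment coefficients over an $h$-independent width.
\begin{itemize}
\item Take a smooth cutoff $\chi$ with $\chi\equiv1$ on $B_{R/2}(P)$ and $\operatorname{supp}\chi\subset B_{3R/4}(P)$. Set $a_j=c_0\chi(x_j)$ and $b_j=A\chi(x_j)$.
\item Every node with $\chi(x_j)\neq0$ is enriched, so the enriched part equals $(I_h\chi)\Psi_s$.
\item Let $w:=u-(I_h\chi)\Psi_s$, which vanishes on $\Gamma_D$, and $v_h:=I_hw+(I_h\chi)\Psi_s\in V_h^{XFEM}$. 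Then $u-v_h=w-I_hw$.
\item For $h$ small, every element near $P$ has all its vertices in $B_{R/2}(P)$, so $I_h\chi\equiv1$ there and $w=\rho$.
\item Every other element lies outside $B_{R/2-h}(P)$. There $u$ and $\Psi_s$ are smooth and $|\nabla I_h\chi|\lesssim R^{-1}$ uniformly in $h$. Hence $w|_T\in H^2(T)$ and $\sum_T h_T^2|w|_{H^2(T)}^2=\mathcal O(h^2)$.
\end{itemize}
Elementwise interpolation then gives $\inf_{v_h}\|u-v_h\|_{H^1}=\mathcal O(h)$ for $P_1$ with no blending hypothesis. The constant now also depends on $R$ and on $|c_0|+|A|$. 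Given the regularity inputs the paper already uses, this route would make the rate in \eqref{eq:xfem_rate} unconditional, which neither your sketch nor the paper's argument achieves.
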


\begin{proof}
Write the weak form \eqref{eq:weak_form} as $\langle \mathcal{A}(u), v \rangle = 0$ for all $v \in H^1_D(\Omega)$, where
$\mathcal{A}(w) := -\Delta w + \mu w - f(x, w) - g(\nabla w)$ in the weak sense, and put $e := u - w$ for $u, w \in H^1_D(\Omega)$.

\emph{Strong monotonicity and Lipschitz continuity.} First, $\mathcal{A}$ maps $H^1_D(\Omega)$ into its dual $H^{-1}(\Omega)$: the global linear-growth bound
$|F(x,\lambda,\Lambda)| \le C_g(1+|\lambda|+|\Lambda|)$ gives, with $g(0)=0$ by positive homogeneity, $|f(x,0)| \le C_g$, so $f(\cdot,0) \in L^\infty(\Omega)$;
together with the Lipschitz bounds this yields $|f(x,w)| \le C_g(1+|w|)$ and $|g(\nabla w)| \le C_g|\nabla w|$, whence $f(x,w),\,g(\nabla w) \in L^2(\Omega)$ for
$w \in H^1_D(\Omega)$ and $\mathcal{A}(w) \in L^2(\Omega) \subset H^{-1}(\Omega)$ globally. With the Lipschitz bounds $|f(x,u)-f(x,w)| \le C_g|e|$ and $|g(\nabla u)-g(\nabla w)| \le C_g|\nabla e|$
(Assumption~\ref{ass:gradient_penalty} and the Lipschitz structure of $f(x,\cdot)$ underlying the well-posedness of Proposition~\ref{prop:wellposed}),
together with $\mu \ge \mu_0$ and Young's inequality $C_g\|\nabla e\|\,\|e\| \le \tfrac12\|\nabla e\|^2 + \tfrac{C_g^2}{2}\|e\|^2$,
\begin{align*}
\langle \mathcal{A}(u)-\mathcal{A}(w), e\rangle
&= \|\nabla e\|_{L^2}^2 + \int_\Omega \mu\, e^2 - \int_\Omega \big(f(x,u)-f(x,w)\big)e - \int_\Omega \big(g(\nabla u)-g(\nabla w)\big)e \\
&\ge \|\nabla e\|_{L^2}^2 + (\mu_0 - C_g)\|e\|_{L^2}^2 - C_g\|\nabla e\|_{L^2}\|e\|_{L^2} \\
&\ge \tfrac12\|\nabla e\|_{L^2}^2 + \big(\mu_0 - C_g - \tfrac{C_g^2}{2}\big)\|e\|_{L^2}^2.
\end{align*}
The coercivity condition $\mu_0 > C_g + \tfrac{C_g^2}{2}$ of Proposition~\ref{prop:wellposed}, a threshold depending only on $C_g$, as the estimate above uses
only Young's inequality and no domain constant, ensures $\mu_0 - C_g - \tfrac{C_g^2}{2} > 0$, so $\mathcal{A}$ is
strongly monotone, $\langle \mathcal{A}(u)-\mathcal{A}(w), e\rangle \ge \alpha\|e\|_{H^1}^2$ with $\alpha := \min\{\tfrac12,\ \mu_0 - C_g - \tfrac{C_g^2}{2}\} > 0$.
Dually, $\langle \mathcal{A}(u)-\mathcal{A}(w), v\rangle \le L\|e\|_{H^1}\|v\|_{H^1}$ for every $v \in H^1_D(\Omega)$, with $L := 1 + \|\mu\|_{L^\infty} + 2C_g$,
so $\mathcal{A}$ is Lipschitz. Being strongly monotone and Lipschitz, $\mathcal{A}$ is a homeomorphism of $H^1_D(\Omega)$ onto its dual and of every closed
subspace onto its own dual (Browder--Minty); hence both the continuous problem and its Galerkin restriction to the finite-dimensional space $V_h^{XFEM}$ possess
unique solutions $u$ and $u_h^{XFEM}$ (this is the uniqueness asserted in Proposition~\ref{prop:wellposed}).

\emph{Quasi-optimality.} The continuous and discrete problems give $\langle \mathcal{A}(u), v_h\rangle = \langle \mathcal{A}(u_h^{XFEM}), v_h\rangle = 0$ for all
$v_h \in V_h^{XFEM} \subset H^1_D(\Omega)$, hence the Galerkin identity $\langle \mathcal{A}(u)-\mathcal{A}(u_h^{XFEM}), v_h\rangle = 0$.
Writing $\varepsilon := u - u_h^{XFEM}$, for any $v_h \in V_h^{XFEM}$,
\[
\alpha\|\varepsilon\|_{H^1}^2 \le \langle \mathcal{A}(u)-\mathcal{A}(u_h^{XFEM}), \varepsilon\rangle = \langle \mathcal{A}(u)-\mathcal{A}(u_h^{XFEM}), u - v_h\rangle \le L\|\varepsilon\|_{H^1}\|u - v_h\|_{H^1},
\]
so $\|\varepsilon\|_{H^1} \le (L/\alpha)\|u - v_h\|_{H^1}$; the infimum over $v_h$ gives \eqref{eq:cea} with $C = L/\alpha$.

\emph{Best-approximation rate.} It remains to bound the right-hand side of \eqref{eq:cea} by a competitor $v_h$. Decompose $u = u_0 + u_1 + \mathcal{R}$ near
$P$ (Theorem~\ref{thm:resonance}), and split $u_1 = A\,r^{3/2}\ln r\,\sin(3\theta/2) + r^{3/2}\Psi(\theta) + c_1 r^{3/2}\sin(3\theta/2)$ with $A = -2\kappa/(3\pi)$.
In the interior of the enrichment zone, where $\sum_{j\in\mathcal{N}_{enr}} N_j \equiv 1$, the enriched functions $N_j\Phi_{lin}$ and $N_j\Phi_{log}$ reproduce
only the two profiles $u_0 = c_0\Phi_{lin}$ and $A\,r^{3/2}\ln r\,\sin(3\theta/2) = A\Phi_{log}$; the offset $r^{3/2}\Psi$ and the homogeneous mode
$c_1 r^{3/2}\sin(3\theta/2)$ are not in the enrichment span. The point is that, of all these contributions, only $u_0$ fails to lie in $H^2_{loc}$ (its second
derivatives scale as $r^{-3/2}$), whereas $r^{3/2}\ln r\,\sin(3\theta/2)$, $r^{3/2}\Psi$, $c_1 r^{3/2}\sin(3\theta/2)$ and $\mathcal{R}$ all belong to $H^2_{loc}$
(their second derivatives scaling at worst as $r^{-1/2}\ln r \in L^2$). Hence the competitor $v_h$ carrying the exact enrichment coefficients $c_0, A$ together
with the nodal interpolant $I_h\rho$ of the residual $\rho := r^{3/2}\Psi + c_1 r^{3/2}\sin(3\theta/2) + \mathcal{R} \in H^2_{loc}$ leaves, in the zone interior,
$u - v_h = \rho - I_h\rho$; the standard estimate $\|\rho - I_h\rho\|_{H^1} \le C h\|\rho\|_{H^2}$ applies, and away from $P$ the solution is $H^2$-regular with
the same first-order bound. The single non-classical contribution is the \emph{blending layer}, the ring of elements carrying only part of the enrichment,
where $\sum_{j\in\mathcal{N}_{enr}} N_j \not\equiv 1$ and $u_0$ and the logarithmic profile are not reproduced exactly. A first-order bound on this layer is the
object of the partition-of-unity approximation theory of \cite{melenk1996} and is standardly recovered by the stabilized (SGFEM) enrichment of the
conditioning remark below, which restores the reproduction property up to the interpolation error. We do not carry out this estimate here; a self-contained
blending and stabilization analysis tailored to the $r^{3/2}\ln r$ enrichment is left to future work (Section~\ref{sec:future_work}), so the leading-order
rate is stated conditionally on it: assuming the standard first-order blending bound, the contributions combine to
$\inf_{v_h}\|u - v_h\|_{H^1} \le C h\big(\|\rho\|_{H^2(B_R(P))} + \|u\|_{H^2(\Omega \setminus B_R(P))}\big)$, which with \eqref{eq:cea} yields
\eqref{eq:xfem_rate}. The quasi-optimality \eqref{eq:cea} itself is unconditional.
\end{proof}

\begin{remark}[Conditioning and quadrature]
Two well-known practicalities attend the enriched space \eqref{eq:xfem_expansion}. First, partition-of-unity enrichment degrades the conditioning of the
stiffness matrix, since $N_j \Phi$ becomes nearly linearly dependent on the standard basis within the blending layer; a stable generalized finite element
(SGFEM) correction, replacing $\Phi$ by $\Phi - I_h \Phi$, or a local orthogonalization against $V_h$, restores the conditioning to that of the unenriched
scheme without altering the approximation property used in Proposition~\ref{prop:quasiopt}. Second, the enrichment integrands inherit the singular factors
$r^{1/2}$ and $r^{3/2} \ln r$, so element integrals abutting $P$ require graded or polar quadrature rather than standard Gauss rules; passing to polar
coordinates $dx = r\, dr\, d\theta$ absorbs the $r^{-1/2}$ gradient singularity and renders the integrand bounded. Both points are taken up among the software
considerations of Section~\ref{sec:future_work}.
\end{remark}

\section{Numerical Comparison}
\label{sec:numerical_comparison}

To probe the theoretical predictions, we carried out a sequence of finite element experiments built directly on the FreeFEM++ discretization of Mi\v{s}ur (2021)
\cite{misur2021}: the semilinear problem \eqref{eq:misur_specific} on the unit disc, with the Dirichlet and Neumann arcs meeting at $(\pm 1, 0)$, discretized
with continuous $P_1$ elements and solved by Picard (fixed-point) iteration to a relative $H^1$ tolerance of $10^{-5}$. All errors are measured against
high-resolution junction-adapted reference solutions, graded towards the mixed junctions. Each reference carries more total degrees of freedom (about
$3.2$--$3.3\times10^5$) than the finest uniform test mesh, with its bulk element size capped below that of the finest quasi-uniform mesh and its junction
grading refined to a minimum element size at least as small as that of the most strongly adapted test mesh (Configuration~B); it is therefore at least as
fine as every test discretization throughout the domain and far finer in the immediate vicinity of the junctions, where the singular structure, and hence
the discretization error, concentrates. A self-convergence check bounds the reference error: refining the adaptmesh interpolation-error target by a factor
of two, from $10^{-4}$ to the adopted $5\times10^{-5}$, which raises the reference from $1.7\times10^5$ to $3.3\times10^5$ degrees of freedom, shifts the
reference by $2.1\times10^{-3}$ in relative $H^1$ and $2.9\times10^{-5}$ in the local $L^\infty$ near the junctions, both below every tabulated error, so
the reported values are limited by the test discretization rather than by the reference. The local error is sampled within a neighborhood of radius
$\epsilon = 0.1$ around each junction.

We report four experiments:
\begin{itemize}
    \item \textbf{Resonance control.} The gradient penalty is switched off and on ($g(\nabla u) = 0$ versus a norm penalty) on an identical
    quasi-uniform mesh family, isolating the effect of the resonant term. We run this control for both the anisotropic $\ell_1$-norm
    $|\partial_{x_1} u| + |\partial_{x_2} u|$ and the isotropic $\ell_2$-norm $|\nabla u|$, whose predicted resonance constants differ
    ($\kappa = c_0/2$ versus $c_0/3$).
    \item \textbf{Configuration A (uniform baseline).} The standard $P_1$ Galerkin scheme on quasi-uniform meshes, with no special treatment of the junction.
    \item \textbf{Configuration B (junction-adapted).} The identical solver combined with local mesh adaptation concentrating degrees of freedom around
    the junctions $(\pm 1, 0)$, realizing the ``exact local mesh adaptation'' route anticipated in Section \ref{sec:xfem_formulation}.
    \item \textbf{Coefficient extraction.} From a high-order ($P_2$) junction-adapted solution we directly measure the logarithmic amplitude $A$ predicted
    by Theorem~\ref{thm:resonance}, both on the curved disc and on a curvature-free flat junction.
\end{itemize}

\subsection{The Resonance Control}
Table \ref{tab:control} isolates the influence of the gradient penalty on an identical mesh family. Two features stand out. First, the experimental order
of convergence (EOC) in the energy norm is essentially unchanged when the penalty is activated: the two EOC sequences coincide to within $0.01$ at every
refinement step ($0.73,\, 0.54,\, 0.56,\, 0.87$ without the penalty versus $0.72,\, 0.53,\, 0.56,\, 0.87$ with it), confirming that the $r^{3/2}\ln r$
resonance is a higher-order effect that does not alter the leading, $r^{1/2}$-limited global rate. Second, the penalty
systematically raises the error magnitude, and this increase is localized: activating the penalty inflates the global relative $H^1$ error by a
nearly constant $\approx 7\%$, but inflates the local $L^\infty$ error near the junctions by $\approx 19\%$ at every resolution. This local-to-global
amplification factor of $\approx 2.6$ is the numerical signature of the junction-concentrated higher-order pollution predicted by Theorem \ref{thm:resonance}.
Consistently, the fixed-point iteration requires one additional step (eight versus seven) to reach tolerance once the penalty is present.

\begin{table}[ht]
\centering
\caption{Resonance control on quasi-uniform meshes: effect of activating the $\ell_1$ gradient penalty. Relative $H^1$ error and local $L^\infty$ error
within radius $\epsilon = 0.1$ of $(\pm 1, 0)$, with Picard iteration counts $k_N$ (no penalty / $\ell_1$ penalty).}
\label{tab:control}
\begin{tabular}{l c c c c c c}
\hline
 & & \multicolumn{2}{c}{Rel. $H^1$ error} & \multicolumn{2}{c}{Local $L^\infty$ error} & \\
$N$ & $h_{max}$ & no pen. & $\ell_1$ pen. & no pen. & $\ell_1$ pen. & $k_N$ \\
\hline
40  & 0.1276  & 0.1959  & 0.2085  & $1.046\times10^{-2}$ & $1.233\times10^{-2}$ & 7 / 8 \\
80  & 0.06935 & 0.1258  & 0.1346  & $7.283\times10^{-3}$ & $8.641\times10^{-3}$ & 7 / 8 \\
160 & 0.03627 & 0.08873 & 0.09534 & $5.134\times10^{-3}$ & $6.108\times10^{-3}$ & 7 / 8 \\
320 & 0.01881 & 0.06146 & 0.06597 & $3.679\times10^{-3}$ & $4.377\times10^{-3}$ & 7 / 8 \\
640 & 0.01116 & 0.03900 & 0.04187 & $2.408\times10^{-3}$ & $2.868\times10^{-3}$ & 7 / 8 \\
\hline
\end{tabular}
\end{table}

To confirm that this junction-localized pollution is a property of the resonance itself and not of the particular $\ell_1$ geometry, Table \ref{tab:control_l2}
repeats the control with the isotropic Euclidean penalty $g(\nabla u) = |\nabla u|$, whose predicted resonance constant is $\kappa = c_0/3$ rather than the
$\ell_1$ value $c_0/2$ (Section \ref{sec:application}). The no-penalty column is, by construction, identical to that of Table \ref{tab:control}, since it is
measured against the same reference. Activating the $\ell_2$ penalty reproduces the same qualitative signature, an essentially unchanged energy-norm rate,
one extra Picard step, and a strictly positive, junction-concentrated error inflation, but at a milder magnitude: the global relative
$H^1$ error rises by a nearly constant $\approx 6\%$ and the local $L^\infty$ error by $\approx 14.5\%$, giving a local-to-global amplification of $\approx 2.5$.
The comparison with the $\ell_1$ figures ($\approx 7\%$, $\approx 19\%$, amplification $\approx 2.6$) orders correctly, as the theory dictates: the smaller resonance
constant of the isotropic norm produces a correspondingly smaller junction pollution, corroborating the norm-dependence of $\kappa$ established in Section
\ref{sec:application}.

\begin{table}[ht]
\centering
\caption{Resonance control with the isotropic $\ell_2$ (Euclidean) penalty $g(\nabla u) = |\nabla u|$, on the same quasi-uniform mesh family as Table
\ref{tab:control}. The no-penalty columns coincide with those of Table \ref{tab:control} (common reference). Picard counts $k_N$ are reported as
(no penalty / $\ell_2$ penalty).}
\label{tab:control_l2}
\begin{tabular}{l c c c c c c}
\hline
 & & \multicolumn{2}{c}{Rel. $H^1$ error} & \multicolumn{2}{c}{Local $L^\infty$ error} & \\
$N$ & $h_{max}$ & no pen. & $\ell_2$ pen. & no pen. & $\ell_2$ pen. & $k_N$ \\
\hline
40  & 0.1276  & 0.1959  & 0.2057  & $1.046\times10^{-2}$ & $1.192\times10^{-2}$ & 7 / 8 \\
80  & 0.06935 & 0.1258  & 0.1326  & $7.283\times10^{-3}$ & $8.333\times10^{-3}$ & 7 / 8 \\
160 & 0.03627 & 0.08873 & 0.09387 & $5.134\times10^{-3}$ & $5.882\times10^{-3}$ & 7 / 8 \\
320 & 0.01881 & 0.06146 & 0.06495 & $3.679\times10^{-3}$ & $4.216\times10^{-3}$ & 7 / 8 \\
640 & 0.01116 & 0.03900 & 0.04125 & $2.408\times10^{-3}$ & $2.761\times10^{-3}$ & 7 / 8 \\
\hline
\end{tabular}
\end{table}

\subsection{Global Convergence and Degree-of-Freedom Efficiency}
Table \ref{tab:h1_convergence} compares the uniform baseline (Configuration A) with the junction-adapted scheme (Configuration B), reporting the
degree-of-freedom convergence rate $s$ (error $\sim N_{\mathrm{dof}}^{-s}$) that governs computational efficiency. Because the leading $r^{1/2}$
singularity caps the attainable rate on quasi-uniform meshes, Configuration A stalls at $s \approx 0.3$, well below the $P_1$ optimum of $s = 1/2$.
Concentrating degrees of freedom at the junctions removes this bottleneck: Configuration B restores the optimal $P_1$ complexity $s = \tfrac12$
that the uniform scheme forfeits. We stress that this recovery is achieved by junction-adapted mesh refinement, one of the two routes anticipated in
Section~\ref{sec:xfem_formulation}, and not by the enriched $r^{3/2}\ln r$ space itself: no XFEM solver is run here, so these experiments corroborate the
predicted $s = \tfrac12$ efficiency without testing the enriched discretization, whose optimal-rate estimate \eqref{eq:xfem_rate} remains conditional on the
blending analysis deferred in Proposition~\ref{prop:quasiopt}. The measured step rates sit modestly above $\tfrac12$ (rising from $s \approx 0.55$ to $s \approx 0.65$); since $s = \tfrac12$
is the $P_1$ approximation-theoretic ceiling in two dimensions (Proposition~\ref{prop:quasiopt}), this small excess is a pre-asymptotic
transient, since the coarse adapted meshes still under-resolve the junction, so each refinement reduces the error slightly faster than the asymptotic rate until the
grading is fully developed, and the rate must settle to $s = \tfrac12$ under further refinement. The practical gain is
substantial: at the finest level Configuration B reaches a relative $H^1$ error of $1.69\times10^{-2}$ with only $1.36\times10^4$ unknowns, whereas
Configuration A attains a $2.5$-fold larger error ($4.19\times10^{-2}$) using $1.43\times10^5$ unknowns. Configuration B is thus simultaneously an order
of magnitude smaller in size and more than twice as accurate; extrapolating the stalled rate of Configuration A, matching Configuration B's accuracy on
quasi-uniform meshes would require well in excess of $10^6$ degrees of freedom.

\begin{table}[ht]
\centering
\caption{Global relative $H^1$ error versus degrees of freedom: uniform Configuration A and junction-adapted Configuration B.
The rate $s$ (error $\sim N_{\mathrm{dof}}^{-s}$) is estimated between consecutive rows; the quasi-uniform optimum for $P_1$ elements is $s = 1/2$.}
\label{tab:h1_convergence}
\begin{tabular}{l c c c c c c}
\hline
 & \multicolumn{3}{c}{Config A (uniform)} & \multicolumn{3}{c}{Config B (adapted)} \\
$N$ & ndof & rel. $H^1$ & $s$ & ndof & rel. $H^1$ & $s$ \\
\hline
40  & 600    & 0.2085  & --   & 786   & 0.09321  & --   \\
80  & 2355   & 0.1346  & 0.32 & 1606  & 0.06283  & 0.55 \\
160 & 9093   & 0.09534 & 0.25 & 3408  & 0.04146  & 0.55 \\
320 & 35946  & 0.06597 & 0.27 & 6827  & 0.02640  & 0.65 \\
640 & 142902 & 0.04187 & 0.33 & 13638 & 0.01687  & 0.65 \\
\hline
\end{tabular}
\end{table}

\subsection{Local Accuracy and Iteration Efficiency}
Table \ref{tab:efficiency} reports the local $L^\infty$ error within radius $\epsilon = 0.1$ of the junctions, together with the Picard iteration counts.
Here the advantage of resolving the junction is most pronounced: the junction-adapted local error decreases monotonically and rapidly, reaching $5.7\times10^{-4}$
at the finest level, about five times smaller than the uniform baseline's $2.9\times10^{-3}$, whereas the uniform scheme, whose elements never resolve
the singularity, decays only sluggishly. Configuration B simultaneously reduces the fixed-point iteration count (three to four steps versus a uniform eight),
since each adapted level is initialized by interpolation from the previous one. Measured against a junction-graded reference that is far finer than any test
mesh near the singularity, this monotone decay directly exhibits the localized structure of the resonant correction.

\begin{table}[ht]
\centering
\caption{Local $L^\infty$ error within radius $\epsilon = 0.1$ of the junctions and Picard iteration counts $k_N$ (tolerance $10^{-5}$): uniform
Configuration A versus junction-adapted Configuration B.}
\label{tab:efficiency}
\begin{tabular}{lcccc}
\hline
\textbf{Discretization ($N$)} & Config A $L^\infty_{loc}$ & Config B $L^\infty_{loc}$ & Config A $k_N$ & Config B $k_N$ \\
\hline
40  & $1.233\times10^{-2}$ & $4.282\times10^{-3}$ & 8 & 4 \\
80  & $8.641\times10^{-3}$ & $2.875\times10^{-3}$ & 8 & 4 \\
160 & $6.108\times10^{-3}$ & $1.780\times10^{-3}$ & 8 & 4 \\
320 & $4.377\times10^{-3}$ & $1.024\times10^{-3}$ & 8 & 3 \\
640 & $2.868\times10^{-3}$ & $5.744\times10^{-4}$ & 8 & 3 \\
\hline
\end{tabular}
\end{table}

\subsection{Direct Validation of the Logarithmic Coefficient}
The experiments above confirm the localization of the resonance; we now confirm its exact amplitude. Theorem~\ref{thm:resonance} predicts the
dimensionless ratio $A/c_0 = -2\kappa/(3\pi c_0)$ between the logarithmic coefficient $A$ and the leading stress-intensity coefficient $c_0$: explicitly
$-1/(3\pi)$, $-2/(9\pi)$, and $-1/(6\pi)$ for the $\ell_1$, $\ell_2$, and $\ell_\infty$ penalties respectively (Section~\ref{sec:application}).
To measure it we take a high-order ($P_2$) junction-adapted solution and, at each small radius $\rho$, least-squares project the angular trace $u(\rho,\cdot)$
onto the mixed eigenmodes $\sin\!\big((k+\tfrac12)\theta\big)$. The $\sin(\tfrac{3\theta}{2})$ channel $d_{3/2}(\rho)$ then obeys
$\rho^{-3/2}d_{3/2}(\rho) = A\ln\rho + B + o(1)$ as $\rho\to0$, so a linear fit in $\ln\rho$ returns $A$, while the $\sin(\tfrac{\theta}{2})$ channel returns $c_0$.

Two features of the extraction are worth noting. First, on the unit disc the measured ratio overshoots the flat-wedge prediction ($A/c_0 \approx -0.18$ for
$\ell_1$, against the predicted $-0.106$) and approaches it only slowly as $\rho\to0$. The overshoot is geometric, not a discretization artifact: the
$P_1$ and $P_2$ extractions differ by only about $5\%$, so refining the discretization order closes only a small part, of order $10\%$, of the gap; the
residual is the finite-radius signature of the disc's curved Dirichlet and Neumann arcs, whereas Theorem~\ref{thm:resonance} is a flat-wedge
($\alpha = \pi$) statement. Second, removing the curvature confirms this diagnosis and, further, verifies the norm-dependence of the coefficient.
Table~\ref{tab:logfit} reports the extraction on a curvature-free flat junction, the upper half-disc $\{x^2+y^2<1,\ y>0\}$ with the Dirichlet/Neumann transition
placed at the centre of the straight diameter, so that the junction is locally an exact flat wedge. For each of the three canonical norms the predicted
coefficient is recovered: as the fitting window contracts toward the junction the measured $A/c_0$ converges monotonically to its target, agreeing to within
$0.6$--$1.6\%$ at the finest window, and the measured amplitudes are correctly ordered $|A/c_0|_{\ell_1} > |A/c_0|_{\ell_2} > |A/c_0|_{\ell_\infty}$ in
exact accord with $\kappa = c_0/2 > c_0/3 > c_0/4$. This simultaneously validates the coefficient formula $A = -2\kappa/(3\pi)$ and the norm-universality
established analytically in Section~\ref{sec:application}, not merely the existence and sign of the anomaly.

\begin{table}[ht]
\centering
\caption{Direct extraction of the logarithmic coefficient on a curvature-free flat junction ($P_2$ elements), for the $\ell_1$, $\ell_2$, and $\ell_\infty$
penalties. For each norm $A/c_0$ is fitted over sampling radii $\rho \in [0.006,\ \rho_{\max}]$ and reported at a coarse ($\rho_{\max} = 0.12$) and a fine
($\rho_{\max} = 0.03$) window; the fine-window value agrees with the predicted $A/c_0 = -2\kappa/(3\pi c_0)$ to the relative error shown, and the agreement
improves monotonically as $\rho_{\max} \to 0$.}
\label{tab:logfit}
\begin{tabular}{l c r r r r}
\hline
Penalty & $\kappa$ & predicted $A/c_0$ & $\rho_{\max}{=}0.12$ & $\rho_{\max}{=}0.03$ & rel.\ err. \\
\hline
$\ell_1$      & $c_0/2$ & $-1/(3\pi) = -0.1061$ & $-0.1088$ & $-0.1069$ & $0.7\%$ \\
$\ell_2$      & $c_0/3$ & $-2/(9\pi) = -0.0707$ & $-0.0728$ & $-0.0711$ & $0.6\%$ \\
$\ell_\infty$ & $c_0/4$ & $-1/(6\pi) = -0.0531$ & $-0.0550$ & $-0.0539$ & $1.6\%$ \\
\hline
\end{tabular}
\end{table}

Taken together, these experiments confirm both the localized structure and the exact analytic content of the theory: an energy-norm rate governed by the
$r^{1/2}$ singularity, a junction-concentrated higher-order penalty contribution whose magnitude tracks the resonance constant $\kappa$ across norms
($\ell_1$ versus $\ell_2$), and, on a curvature-free flat junction, logarithmic amplitudes matching the predicted $A = -2\kappa/(3\pi)$ to within $1.6\%$
for all three canonical norms. A fully enriched (XFEM) solver realizing the $r^{3/2}\ln r$ space of Section~\ref{sec:xfem_formulation} directly on the curved
disc, dispensing with the flat-junction idealization, remains the subject of ongoing work.

\section{Future Work}
\label{sec:future_work}

The theoretical framework established in this article opens several avenues for immediate computational implementation and broader topological generalization.

\subsection{Software Architectures for Enriched Solvers}
Realizing the fully enriched (XFEM) solver anticipated in Section \ref{sec:xfem_formulation}, and thereby a direct validation of the logarithmic coefficient
itself, requires a computational infrastructure capable of handling the enriched Galerkin formulation. Standard finite element packages are highly optimized
for polynomial bases, making the injection of localized logarithmic singularities a non-trivial software engineering challenge. On the analytical side, a
complete a priori convergence analysis of the enriched scheme, providing self-contained blending-layer and stabilization estimates tailored to the $r^{3/2}\ln r$
enrichment and making rigorous the leading-order rate stated conditionally in Proposition~\ref{prop:quasiopt}, will accompany the implementation.
Future work will pursue two distinct implementation architectures to evaluate the enriched scheme:

\begin{itemize}
    \item \textbf{The FreeFEM++ Baseline:} To provide direct continuity with the original numerical scheme \cite{misur2021}, one path involves augmenting the
    FreeFEM++ environment. Because the framework lacks native XFEM support for arbitrary logarithmic anomalies, future work will require developing custom
    macros to manually intercept the stiffness matrix assembly. This entails accurately computing the enriched integrals near the singularity and manually
    orchestrating the dense coupling blocks generated by the partition of unity, stitching them back into the global sparse matrix without compromising the
    solver's memory profile.
    \item \textbf{A Custom Python/C-Extension Architecture:} An alternative path will explore building a custom XFEM solver from the ground up, utilizing a
    Python stack for high-level matrix orchestration alongside C-extensions for low-level performance. The necessary work involves designing a custom
    degree-of-freedom mapping directly into Compressed Sparse Row (CSR) matrices to preserve global sparsity. To prevent severe computational bottlenecks
    during assembly, the highly localized, non-standard numerical quadrature of the $r^{3/2} \ln(r)$ integrals will be offloaded to optimized C-level bindings,
    completely bypassing Python interpreter overhead during the core assembly loops.
\end{itemize}

\subsection{Extensions to Three-Dimensional Domains}
Beyond computational implementations, transitioning this semilinear mixed boundary problem into three dimensions introduces significant topological and
functional complexity. In a 3D domain, the intersection $\Gamma_D \cap \Gamma_N$ forms a one-dimensional collision curve rather than an isolated point.
Establishing a local cylindrical coordinate system $(r, \theta, z)$ along this edge, the leading-order linear singularity dynamically scales as:
\begin{equation}
u_0(r, \theta, z) = c(z) r^{1/2} \sin\left(\frac{\theta}{2}\right).
\end{equation}
The edge density $c(z)$ acts as a distribution. Analyzing its boundary regularity requires mapping the trace operators into fractional
Sobolev spaces. Future research will explore decoupling the 3D differential operator by applying the Fourier transform $\mathcal{F}_{z \to \xi}$ along the
$z$-axis, converting the Laplacian $\Delta_{r,\theta,z}$ into the parameterized operator $\Delta_{r,\theta} - |\xi|^2$. Because the gradient penalty generates
an $\mathcal{O}(r^{-1/2})$ forcing term that resonates within the cross-sectional frequency modes, we expect the logarithmic anomaly to persist in 3D, modulated
distributionally along the collision curve; a rigorous treatment is left to future work.

Additionally, at sharp vertices where collision curves intersect (e.g., polyhedral corners), the cylindrical separation fails. Future theoretical work will
require shifting to spherical coordinates $(R, \theta, \varphi)$ and evaluating the resonance conditions against the spectrum of the Laplace-Beltrami operator,
presenting a rich avenue for advanced harmonic analysis and asymptotic matching.

\section*{Declarations}
\textbf{Funding:} The author declares that no funds, grants, or other support were received during the preparation of this manuscript. \\
\textbf{Conflict of interest:} The author declares that he has no conflict of interest. \\
\textbf{Data availability:} Data sharing is not applicable to this article as no datasets were generated or analysed during the current study.\\
\textbf{Generative AI and AI-assisted technologies:} During the preparation of this work, the author(s) used Google Gemini as an assistive tool to transcribe handwritten mathematical notes into \LaTeX{} formatting, to polish the English prose for readability, and to help draft the initial abstract and manuscript summary. Additionally, the AI was utilized during the research phase for conceptual exploration, specifically to search heuristically for potential counterexamples to stress-test preliminary hypotheses. After using this tool, the author(s) meticulously reviewed, verified, and edited all generated text and code. All mathematical claims, proofs, and counterexamples were independently rigorously verified by the human author(s). The author(s) take full intellectual responsibility for the final content of this publication, including all mathematical proofs, formatting, and conceptual framing.

\end{document}